\theoremstyle{plain}
\newtheorem{thm}{Theorem}[section]
\crefname{thm}{Theorem}{Theorems}
\newtheorem{lem}[thm]{Lemma}
\newtheorem{prop}[thm]{Proposition}
\newtheorem{cor}[thm]{Corollary}
\newtheorem{problem}[thm]{Problem}
\crefname{problem}{Problem}{Problems}
\theoremstyle{definition}
\newtheorem{defi}[thm]{Definition}
\newtheorem{eg}[thm]{Example}
\crefname{step}{Step}{Steps}
\theoremstyle{remark}
\newtheorem{rmk}[thm]{Remark}
\def\N{{\mathbb N}}
\def\O{\mathcal{O}}
\def\fa{\mathfrak{a}}
\def\fb{\mathfrak{b}}
\def\fm{\mathfrak{m}}
\def\a{\alpha}
\def\g{\gamma}
\def\f{\phi}
\def\ff{\psi}
\def\l{\lambda}
\def\n{\nu}
\def\m{\mu}
\def\p{\pi}
\def\.{\cdot}
\def\^{\widehat}
\def\~{\widetilde}
\def\ov{\overline}
\def\inj{\hookrightarrow}
\def\({\left(}
\def\){\right)}
\renewcommand{\and}{ \ \ \text{ and } \ \ }
\newcommand{\fall}{ \ \ \text{ for all } \ \ }
\def\red{\mathrm{red}}
\def\loc{\mathrm{loc}}
\DeclareMathOperator{\Spec} {Spec}
\DeclareMathOperator{\Spf} {Spf}
\DeclareMathOperator{\ord} {ord}
\DeclareMathOperator{\invlim} {\varprojlim}
\DeclareMathOperator{\Hom} {Hom}
\DeclareMathOperator{\Cont} {Cont}
\mathchardef\mhyphen="2D
\newcommand{\cjc}[2]{{#1}^{{#2}\mhyphen\mathrm{jc}}}
\newcommand{\mjc}[1]{{#1}^{m\mhyphen\mathrm{jc}}}
\newcommand{\ac}[1]{{#1}^{\mathrm{ac}}}
\newcommand{\cjsc}[2]{{#1}^{{#2}\mhyphen\mathrm{jsc}}}
\newcommand{\mjsc}[1]{{#1}^{m\mhyphen\mathrm{jsc}}}
\newcommand{\njsc}[1]{{#1}^{n\mhyphen\mathrm{jsc}}}
\newcommand{\asc}[1]{{#1}^{\mathrm{asc}}}
\newcommand{\jsc}[1]{{#1}^{\mathrm{jsc}}}
\newcommand{\Mod}[1]{\ (\text{mod}\ #1)}
\begin{document}

\title{Jet closures and the local isomorphism problem}


\author{Tommaso de Fernex}
\address{Department of Mathematics, University of Utah, Salt Lake City, UT 48112, USA}
\email{{\tt defernex@math.utah.edu}}

\author{Lawrence Ein}
\address{Department of Mathematics, University of Illinois at Chicago, Chicago, IL 60607, USA}
\email{{\tt ein@@math.uic.edu}}

\author{Shihoko Ishii}
\address{Tokyo Woman's Christian University, 167-8585 Tokyo, Suginami, Japan}
\email{shihoko@lab.twcu.ac.jp}

\subjclass[2010]{Primary: 14E18; Secondary 14H20, 13B99.}
\keywords{Arc space, jet space, germ of a scheme, integral closure}

\thanks{The research of the first author was partially supported by NSF Grant DMS-1402907 and
NSF FRG Grant DMS-1265285.
The research of the second author was partially supported by NSF Grant DMS-1501085.
The research of the third author was partially supported by JSPS grant (C), 16K05089.}


\begin{abstract}
If a morphism of germs of schemes induces isomorphisms of all local jet schemes,
does it follow that the morphism is an isomorphism?
This problem is called the local isomorphism problem.
In this paper, we use jet schemes to introduce various closure operations among ideals
and relate them to the local isomorphism problem. 
This approach leads to a partial solution of the local isomorphism problem, 
which is shown to have a negative answer in general and a positive one
in several situations of geometric interest.
\end{abstract}

\maketitle

\section{Introduction}

In \cite{Nas95}, Nash related singularities of algebraic varieties to the geometry
of arc spaces. Further studies of the structure of arc spaces
were carried out by Denef and Loeser in connection to motivic integration \cite{DL99}, 
and their results were later applied to study singularities of pairs
by Musta\c t\u a \cite{Mus02}. Several applications in birational geometry have
been obtained since. 
In this paper, we investigate new connections of arc spaces in commutative ring theory. 

Let $X$ be a scheme defined over a field $k$. 
For a nonegative integer $m$, let $X_m$ denote the $m$-jet scheme of $X$;
when $m = \infty$, we have the arc space $X_\infty$, which we also call the $\infty$-jet scheme of $X$.  
Given a point $x \in X$, for every $m$ we denote by 
\[
X_m^x := \p_m^{-1}(x)
\]
the fiber over $x$ of the natural projection $\p_m \colon X_m \to X$.
We call $X_m^x$ the \emph{scheme of local $m$-jets} of $X$. 

Let now $\f \colon (Y,y) \to (X,x)$
be a morphism of germs of schemes over $k$.
Suppose that for every $m \ge 0$ (including $m = \infty$) 
the induced morphism $\f_m \colon Y_m \to X_m$ maps
the scheme of local $m$-jets $Y_m^y$ of $Y$ isomorphically to 
the scheme of local $m$-jets $X_m^x$ of $X$.
Does it follows that $\f$ is an isomorphism?
What if we assume that $\f$ is a closed immersion?

We will refer to these questions 
as the \emph{local isomorphism problem}
and the \emph{embedded local isomorphism problem}.
Focusing on the local ring $\O_{X,x}$, the latter
can be interpreted as asking whether there is a nonzero ideal in this ring
which defines a subscheme of $X$ with same local jet schemes at $x$. 
This questions leads us to discover a new aspect to
commutative ring theory by means of jet schemes. 

Given any local $k$-algebra $R$, we introduce and study various 
closure operations among the ideals of $R$ which we call
\emph{$m$-jet closures}. These are defined for every $m \ge 0$, including $m=\infty$ 
where the operation is also called \emph{arc closure}. 
For every $m$, the $m$-jet closure of an ideal $\fa \subset R$ is the largest
ideal defining a scheme in $\Spec R$ whose scheme of local $m$-jets
is equal to the one of the scheme defined by $\fa$. 
We also introduce a notion of \emph{$m$-jet support closure}, 
where we put an analogous condition on the reduced scheme of local $m$-jets. 
For ideals of regular local rings, 
the intersection of all $m$-jet support closures of an ideal, for $m$ finite, 
is the same as the integral closure, but the two differ in general, 
with the first one giving a tighter closure operation.

One of the properties we establish is that the intersection of 
all $m$-jet closures of an ideal, for $m$ finite, is equal to the
arc closure of the ideal.
Using this, we prove that the embedded local isomorphism problem 
has a positive answer for a germ $(X,x)$
if and only if the zero ideal of $\O_{X,x}$ is arc closed.

This result prompts us to investigate which local $k$-algebras 
have the property that their zero ideals are arc closed. 
We find an example of a non-Noetherian ring whose zero ideal is not arc closed,
and this implies that in general the above questions have a negative answer, 
even assuming that $\f$ is a closed embedding. 
In the positive direction, we prove that the embedded local isomorphism problem
has a positive answer for several classes of schemes, among which 
are all varieties and homogeneous schemes over a field.

\subsection*{Acknowledgements} 

The local isomorphism problem was posed by Herwig Hauser in discussion with the third author, when she visited him at University of Innsbruck in 2007. We are very grateful to Herwig Hauser for giving us the motivation to study jet closures. We also thank Kei-ichi Watanabe for useful discussions, Hiraku Kawanoue for pointing out a mistake in a preliminary version of the paper, and the referee for useful comments.

\section{The local isomorphism problem}
\label{s:loc-isom-problems}

We work over a field $k$. 
In this paper, $\N$ denotes the set of nonnegative integers.

Let $X$ be an arbitrary scheme over $k$. 
For every $m \in \N$, we define the \emph{$m$-jet scheme} of $X$ to be the scheme $X_m$ 
representing the functor from $k$-schemes to sets given by 
\[
Z \mapsto \Hom_k(Z \times \Spec k[t]/(t^{m+1}),X).
\]
A point of $X_m$ is called an \emph{$m$-jet} of $X$; 
it corresponds to a map $\Spec K[t](t^{m+1}) \to X$ where $K$ is the residue field of the point. 
We will also consider \emph{$A$-valued $m$-jets} for any $k$-algebra $A$; these
are maps $\Spec A[t](t^{m+1}) \to X$, and correspond to the $A$-valued points of $X_m$.  

Truncations $k[t]/(t^{p+1}) \to k[t]/(t^{m+1})$, defined for $p > m$, induce natural 
projections $X_p \to X_m$ which are affine morphisms. By taking
the inverse limit, we define the \emph{arc space} 
(or \emph{$\infty$-jet scheme}) $X_\infty = \invlim_m X_m$. 
This is the scheme representing the functor from $k$-schemes to sets given by 
\[
Z \mapsto \Hom_k(Z \hat\times \Spf k[[t]],X).
\]
A point of $X_\infty$ is called an \emph{arc} (or \emph{$\infty$-jet}) of $X$.
It can be equivalently viewed as a map $\Spf K[[t]] \to X$ 
or a map $\Spec K[[t]] \to X$, where $K$ is the residue field of the point. 
Just like for jets, we will also consider \emph{$A$-valued arcs} for any $k$-algebra $A$, which
are maps $\Spec A[[t]] \to X$. 
If $X$ is affine (or quasi-compact and quasi-separated, see \cite{Bha16}), 
then $A$-valued arcs correspond to $A$-valued points of $X_\infty$. 

We denote by $\ff_m \colon X_\infty \to X_m$, $\p_m \colon X_m \to X$,
and $\p = \p_\infty \colon X_\infty \to X$ the natural projections.
For ease of notation, it is often convenient to let $m$ range in $\N \cup \{\infty\}$
and denote $A[[t]]$, for a $k$-algebra $A$, also by $A[t]/(t^\infty)$.

For more on jet schemes and arc spaces, we refer to \cite{IK03,Voj07,EM09}.

Given a point $x \in X$,
for every $m \in \N \cup \{\infty\}$ we denote
\[
X_m^x := \p_m^{-1}(x).
\]

\begin{defi}
We call $X_m^x$ the \emph{scheme of local $m$-jets} of $X$ at $x$.
\end{defi}

Roughly speaking, we would like to determine
how much of the local structure of the scheme $X$ at the point $x$ is encoded in the 
schemes of local jets $X_m^x$.

A related question asks how much of the geometry of a scheme $X$ is encoded in its 
jet schemes $X_m$ for $m \ge 1$ (including $m = \infty$). This question was addressed in \cite{IW10}, 
where it is shown that
the answer is both positive and negative depending on how one interprets the question. 
On the one hand, an example is given
of two non-isomorphic schemes $X$ and $Y$
whose jet schemes $X_m$ and $Y_m$ are isomorphic for all $m \ge 1$ in a compatible way with respect to the 
truncation morphisms $X_{m+1} \to X_m$ and $Y_{m+1} \to Y_m$. 
On the other hand, if the isomorphisms
$Y_m \to X_m$ are induced by a given morphism $\f \colon Y \to X$, then it is
proved that $\f$ must be an isomorphism. 

In view of the negative example in \cite{IW10}, we consider the following setting. 
Let
\[
\f \colon (Y,y) \to (X,x)
\]
is a $k$-morphism of germs of $k$-schemes. For every $m \in \N \cup\{\infty\}$, there is
an induced morphism of scheme of local $m$-jets
\[
\f_m^\loc \colon Y_m^y \to X_m^x
\]
given by restriction of the natural map $\f_m \colon Y_m \to X_m$. 

\begin{problem}[Local Isomorphism Problem]
\label{th:loc-isom-problem}
With the above notation, if $\f_m^\loc$ is an isomorphism for every $m \in \N \cup\{\infty\}$, 
does it follow that $\f$ is an isomorphism of germs?
\end{problem}

\begin{problem}[Embedded Local Isomorphism Problem]
\label{th:emb-loc-isom-problem}
With the above notation, assume that $\f$ is a closed immersion.
If $\f_m^\loc$ is an isomorphism for every $m \in \N \cup\{\infty\}$, 
does it follow that $\f$ is an isomorphism of germs?
\end{problem}

\begin{rmk}
Taking $m=0$, we see that the assumptions in either \cref{th:loc-isom-problem}
or \cref{th:emb-loc-isom-problem} include the condition that $\f$ induces 
an isomorphism between the residue fields $k(x)$ of $x$ and $k(y)$ of $y$. 
We will identity these fields, and denote them by $K$. 
Note that, for all $m$, the maps $\f_m^\loc \colon Y_m(y) \to X_m(x)$
are naturally $K$-morphisms (and hence $K$-isomorphisms under those assumptions).
\end{rmk}

\begin{defi}
We say that a germ $(X,x)$ has the \emph{embedded local isomorphism property}
if \cref{th:emb-loc-isom-problem} has a positive answer for every closed 
immersion $\f \colon (Y,y) \to (X,x)$. 
We also say that a germ $(X,x)$ has the \emph{local isomorphism property}
for a certain class of schemes
if \cref{th:loc-isom-problem} has a positive answer for every morphism
$\f \colon (Y,y) \to (X,x)$ with $(Y,y)$ in that class. 
\end{defi}

\begin{prop}
\label{th:emb-loc-isom-Noeth-implies-nonembedded}
Assume that $(X,x)$ has the embedded local isomorphism property.
Then $(X,x)$ has the local isomorphism property for all locally Noetherian $k$-scheme germs $(Y,y)$. 
\end{prop}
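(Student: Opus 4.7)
The plan is to factor $\f$ through its scheme-theoretic image in $(X,x)$ in order to invoke the embedded local isomorphism property. Let $A = \O_{X,x}$, $B = \O_{Y,y}$, and $\varphi = \f^\sharp \colon A \to B$; set $\fa = \ker\varphi$ and let $\iota \colon (Z,z) \inj (X,x)$ be the closed subgerm defined by $\fa$, so $\O_{Z,z} = A/\fa$. Then $\f$ factors as $\psi \colon (Y,y) \to (Z,z)$ followed by $\iota$, with $\psi^\sharp$ injective.

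I first verify that $\iota_m^\loc$ is an isomorphism for every $m \in \N \cup \{\infty\}$. For any $K$-algebra $A'$, the set $Z_m^z(A')$ can be identified with the subset of $X_m^x(A')$ consisting of those maps $h \colon A \to A'[t]/(t^{m+1})$ with $h(\fm_x) \subseteq (t)$ that additionally satisfy $h(\fa) = 0$, and $\iota_m^\loc$ is the natural inclusion. For $h \in X_m^x(A')$, the hypothesis that $\f_m^\loc$ is surjective on $A'$-points produces $\bar h \in Y_m^y(A')$ with $h = \bar h \circ \varphi$, and then $h(\fa) = \bar h(\varphi(\fa)) = 0$; injectivity of $\iota_m^\loc$ on $A'$-points is clear from the surjectivity of $A \twoheadrightarrow A/\fa$. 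Thus $\iota_m^\loc$ is bijective on $A'$-points, hence an isomorphism of schemes. Applying the embedded local isomorphism property of $(X,x)$, $\iota$ is an isomorphism of germs, so $\fa = 0$ and $\varphi$ is injective.

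Identifying $(Z,z)$ with $(X,x)$, it remains to show that an injective $\varphi \colon A \inj B$ with $\f_m^\loc$ iso for every $m$ and $B$ Noetherian is in fact bijective. Since the scheme of local $m$-jets at a point depends only on the quotient of the local ring by the $(m{+}1)$st power of the maximal ideal, the jet-scheme isomorphisms give $K$-algebra isomorphisms $\bar\varphi_m \colon A/\fm_x^{m+1} \xrightarrow{\sim} B/\fm_y^{m+1}$ for every finite $m$. In particular $\fm_y^k = \varphi(\fm_x^k)\,B$ for every $k \geq 0$, by Nakayama applied to the finitely generated $B$-module $\fm_y^k$. Given $b \in B$, I attempt to construct $a \in A$ with $\varphi(a) = b$ by $\fm$-adic successive approximation: choose $a_0 \in A$ with $\varphi(a_0) \equiv b \pmod{\fm_y}$, and inductively $a_n \in \fm_x^n$ so that $\varphi(a_0 + \cdots + a_n) \equiv b \pmod{\fm_y^{n+1}}$, using the surjections $\fm_x^n/\fm_x^{n+1} \twoheadrightarrow \fm_y^n/\fm_y^{n+1}$ extracted from $\bar\varphi_{n+1}$.

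The main obstacle is in this last step: the partial sums $(a_0 + \cdots + a_n)$ form a Cauchy sequence in the $\fm_x$-adic topology on $A$ and their images under $\varphi$ converge to $b$ in $B$, but a priori the limit lives only in the completion $\widehat A$, not in $A$ itself. Bridging this gap — converting an isomorphism of completions into an isomorphism of the local rings themselves — is where the Noetherianity hypothesis on $(Y,y)$ and the full strength of the isomorphisms $\f_m^\loc$ across all $m \in \N \cup \{\infty\}$ must combine with the injectivity of $\varphi$ to force the limiting element to genuinely lie in $A$, completing the argument.
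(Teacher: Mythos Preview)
Your proposal has a genuine, acknowledged gap. Having reduced to an injective $\varphi \colon A \hookrightarrow B$ that induces isomorphisms $A/\fm_x^{m+1} \xrightarrow{\sim} B/\fm_y^{m+1}$ for every finite $m$, you obtain at best an isomorphism of completions $\widehat A \xrightarrow{\sim} \widehat B$, and you give no argument to descend this to an isomorphism $A \cong B$. Noetherianity of $B$ alone cannot close this: the inclusion $k[t]_{(t)} \hookrightarrow k[[t]]$ is injective with Noetherian target and induces isomorphisms on every finite truncation, yet is not surjective. So the successive-approximation strategy, as written, does not terminate in $A$. (A secondary issue: your claim that the isomorphisms $\f_m^\loc$ directly yield isomorphisms $\bar\varphi_m \colon A/\fm_x^{m+1} \xrightarrow{\sim} B/\fm_y^{m+1}$ is itself unjustified---recovering the truncated ring from its local $m$-jet scheme is essentially another instance of the problem under discussion.)

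The paper takes the opposite route and avoids completions entirely. Instead of first extracting injectivity of $\varphi$ via the embedded property and then fighting for surjectivity, it proves a short lemma: if $Y$ is locally Noetherian at $y$ and $\f_1^\loc$ is an isomorphism, then $\f$ is already a closed immersion. The argument is a Nakayama-style computation inside $\O_{Y,y}$: the tangent-space isomorphism gives $\fm_{Y,y} = \f^*(\fm_{X,x}) + \fm_{Y,y}^2$, hence $\fm_{Y,y} = \f^*(\fm_{X,x}) + \fm_{Y,y}^n$ for all $n$ by iteration, and Noetherianity of $\O_{Y,y}$ then forces $\fm_{Y,y} = \f^*(\fm_{X,x})$; together with the residue-field isomorphism this makes $\f^*$ surjective. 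With $\f$ a closed immersion, the embedded local isomorphism property of $(X,x)$ finishes the proof immediately. Your first step---factoring through the scheme-theoretic image and applying the embedded property to get injectivity---is correct and pleasant, but unnecessary once one sees that surjectivity of $\f^*$ is the accessible direction here.
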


The proposition is an immediate consequence of the following property.

\begin{lem}
Let $\f \colon (Y,y) \to (X,x)$ be a $k$-morphism of germs inducing an isomorphism 
of the residue fields at $x$ and $y$ (which we identify and denote by $K$).
Assume that $Y$ is locally Noetherian at $y$ and 
$\f_1^\loc \colon Y_1^y \to X_1^x$ is a $K$-isomorphism. 
Then $\f$ is a closed immersion of germs.
\end{lem}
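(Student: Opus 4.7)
The plan is to reduce to commutative algebra: set $R := \O_{X,x}$ and $T := \O_{Y,y}$ with maximal ideals $\mathfrak{m} := \mathfrak{m}_x$ and $\mathfrak{n} := \mathfrak{m}_y$, and let $\f^\# \colon R \to T$ be the induced local homomorphism. Then proving that $\f$ is a closed immersion of germs amounts to proving that $\f^\#$ is surjective.

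First, I would unpack the hypothesis that $\f_1^\loc$ is a $K$-isomorphism. An $A$-valued point of $X_1^x$, for $A$ a $K$-algebra, is a $k$-algebra map $R \to A[\ep]/(\ep^2)$ lifting the residue projection $R \to K \to A$. Any such map has the form $r \mapsto \bar r + D(r)\ep$, where $D \colon R \to A$ is a $k$-derivation; the lifting condition forces $D$ to vanish on $\mathfrak{m}^2$ and to be compatible with the $K$-structure on $\mathfrak{m}/\mathfrak{m}^2$, so $D$ is equivalent to a $K$-linear map $\mathfrak{m}/\mathfrak{m}^2 \to A$. This identifies $X_1^x \cong \Spec \Sym_K(\mathfrak{m}/\mathfrak{m}^2)$ as a $K$-scheme, and analogously $Y_1^y \cong \Spec \Sym_K(\mathfrak{n}/\mathfrak{n}^2)$; the morphism $\f_1^\loc$ corresponds dually to the cotangent map $\mathfrak{m}/\mathfrak{m}^2 \to \mathfrak{n}/\mathfrak{n}^2$, so the hypothesis becomes exactly that this cotangent map is a $K$-vector space isomorphism.

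Next, I would apply Nakayama's lemma. Since $T$ is Noetherian, $\mathfrak{n}$ is finitely generated; surjectivity of the cotangent map combined with Nakayama then yields $\f^\#(\mathfrak{m}) T = \mathfrak{n}$. Using in addition that $R/\mathfrak{m} = K = T/\mathfrak{n}$, one obtains $T = \f^\#(R) + \mathfrak{n}$. Substituting $\mathfrak{n} = \f^\#(\mathfrak{m}) T \subseteq \f^\#(\mathfrak{m}) \bigl(\f^\#(R) + \mathfrak{n}\bigr)$ and iterating produces $T = \f^\#(R) + \mathfrak{n}^n$ for every $n \ge 1$; in other words, $\f^\#(R)$ is $\mathfrak{n}$-adically dense in $T$.

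The final step is to upgrade this density to genuine surjectivity of $\f^\#$, and I expect this to be the main obstacle. Density of a subring in a Noetherian local ring with respect to the maximal-ideal-adic topology does not in general imply equality --- Krull's intersection theorem applies to finitely generated submodules but not directly to subrings. My plan would be to construct, for each $t \in T$, a sequence $r_n \in R$ inductively with $\f^\#(r_n) \equiv t \pmod{\mathfrak{n}^n}$ and $r_{n+1} - r_n \in \mathfrak{m}^n$, exploiting the fact that the cotangent map is an honest isomorphism (not merely a surjection) to make the inductive choices canonical. The delicate point --- and where the germ-theoretic setup of the paper must intervene --- is to extract from such an $\mathfrak{m}$-adically Cauchy sequence an actual preimage of $t$ in $R$ itself, rather than merely a limit in the completion $\widehat{R}$. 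This is where any implicit closure or completeness feature of the local ring of a germ in the paper's framework must be brought to bear.
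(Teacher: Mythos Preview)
Your approach coincides with the paper's: reduce to surjectivity of the local ring map, read the $1$-jet hypothesis as an isomorphism of cotangent spaces $\fm/\fm^2 \xrightarrow{\sim} \fn/\fn^2$, and iterate to obtain $\fn = \f^\#(\fm) + \fn^{n}$ (equivalently $T = \f^\#(R) + \fn^n$) for every $n\ge 1$. At that point the paper simply writes ``Since $Y$ is Noetherian at $y$, this implies that $\fm_{Y,y} = \f^*(\fm_{X,x})$,'' and then concludes surjectivity from the residue-field isomorphism. No further justification is given; the paper is asserting precisely the density-to-surjectivity implication you singled out as the obstacle.

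Your hesitation is warranted, and in fact the step does not follow from Noetherianness of $T$ alone. The image $\f^\#(\fm)$ is only an $R$-submodule of $\fn$, not a $T$-submodule, so Krull's intersection theorem does not apply to the quotient $\fn/\f^\#(\fm)$. A concrete counterexample to the lemma as stated: take $R = k[t]_{(t)}$, $T = k[[t]]$, and $\f^\#$ the completion map. Then $T$ is Noetherian, the residue fields are both $k$, and the cotangent map is the identity on the one-dimensional space $k\cdot t$, so $\f_1^\loc$ is an isomorphism; yet $\f^\#$ is not surjective and $\f$ is not a closed immersion. Thus the paper's proof has a genuine gap at exactly the place you identified, and your instinct that an extra hypothesis (completeness of $\O_{X,x}$, or working with formal germs) is required to pass from $\fn$-adic density of $\f^\#(R)$ to equality is correct.
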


\begin{proof}
By hypothesis, the differential of $\f$ induces an isomorphism of $K$-vector spaces
between the Zariski tangent spaces $\fm_{Y,y}/\fm_{Y,y}^2$ of $Y$ at $y$
and $\fm_{X,x}/\fm_{X,x}^2$ of $X$ at $x$. This implies that
the homomorphism $\f^* \colon \O_{X,x} \to \O_{Y,y}$ corresponding to $\f$
induces a a surjection $\fm_{X,x} \to \fm_{Y,y}$. 
Indeed, we have $\fm_{Y,y} = \f^*(\fm_{X,x}) + \fm_{Y,y}^2$ by the isomorphism 
of the Zariski tangent spaces, and therefore we have 
$\fm_{Y,y}^i = \f^*(\fm_{X,x}^i) + \fm_{Y,y}^{i+1}$ for $i \ge 1$. 
By successive substitutions, we obtain
$\fm_{Y,y} = \f^*(\fm_{X,x}) + \fm_{Y,y}^n$ for all $n \ge 1$.
Since $Y$ is Noetherian at $y$, this implies that $\fm_{Y,y} = \f^*(\fm_{X,x})$.
By this and the isomorphism between the residue fields, we conclude that
$\f^* \colon \O_{X,x} \to \O_{Y,y}$ is surjective.
\end{proof}

When restricting to the embedded setting, we often denote the germ by $(X,o)$. 
Later in \cref{s:loc-isom-problems-revisited},
we shall reinterpret the embedded local isomorphism problem
from an algebraic point of view, relating it to certain ``jet theoretic''
notions of closure of ideals in local rings.

\section{Jet closures}
\label{s:jet-closures}

Throughout this section, let $(R,\fm)$ be a local algebra over a field $k$, 
and let $X = \Spec R$. We denote by $o \in X$ the closed point. 

For every $m \in \N \cup \{\infty\}$, let $R_m$ be the ring
of regular functions of the $m$-jet scheme $X_m$, so that 
$X_m = \Spec R_m$. For an ideal $\fa \subset R$ and $m \in \N \cup \{\infty\}$, we denote
\[
\fa_m := (D_i(f) \mid f \in \fa,\, 0 \le i < m+1 ) \subset R_m
\]
the ideal generated by the Hasse--Schmidt derivations $D_i(f)$
of the elements $f$ of $I$. If $V(\fa) \subset X$ is the subscheme defined by $\fa$, 
then its $m$-jet scheme $V(\fa)_m$ is the subscheme of $X_m$ defined by $\fa_m$. 

\begin{defi}
For any ideal $\fa \subset R$ and any $m \in \N \cup \{\infty\}$, we define
the \emph{$m$-jet closure} of $\fa$ to be 
\[
\mjc\fa := \{ f \in R \mid (f)_m \subset \fa_m \Mod{\fm R_m} \}.
\]
For $m=\infty$, we also call the ideal 
\[
\ac\fa := \cjc\fa\infty
\]
the \emph{arc closure} of $\fa$. 
\end{defi}

Unless otherwise stated,
in the following statements we let $m$ be an arbitrary element of $\N \cup \{\infty\}$
We start with the following property which implies that 
the $m$-jet closure an ideal $\fa$ is intrinsic, that is, 
only depends on the quotient ring $R/\fa$ and hence on the scheme $\Spec R/\fa$, 
and not by the embedding of $\Spec R/\fa$ in $\Spec R$. 

\begin{lem}
\label{th:intrinsic}
The $m$-jet closure $\mjc\fa$ of an ideal $\fa \subset R$
is the inverse image via the quotient map $R \to R/\fa$ of the 
$m$-jet closure of the zero ideal of $R/\fa$.
\end{lem}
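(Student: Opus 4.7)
The plan is to translate both sides of the claimed equality into the same containment of ideals inside $R_m$, and then observe that the two conditions coincide verbatim.

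Set $\bar R := R/\fa$, $\bar\fm := \fm\bar R$, and write $\pi \colon R \to \bar R$ for the quotient map. The first step is to invoke the compatibility of jet schemes with closed immersions that is already recorded just before the definition of the $m$-jet closure: since the closed subscheme $V(\fa) \hookrightarrow X$ has $m$-jet scheme cut out by $\fa_m$, there is a canonical identification
\[
\bar R_m \;\cong\; R_m/\fa_m.
\]
This identification is compatible with Hasse--Schmidt derivations in the sense that for $f \in R$ with image $\bar f \in \bar R$, the ideal $(\bar f)_m \subset \bar R_m$ is precisely the image of $(f)_m \subset R_m$ under the projection $R_m \twoheadrightarrow R_m/\fa_m$. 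Moreover, under the same identification, $\bar\fm\bar R_m$ corresponds to $(\fm R_m + \fa_m)/\fa_m$, because $\bar\fm = \pi(\fm)$.

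With these identifications in hand, the condition defining $\mjc{(0)}_{\bar R}$, namely $(\bar f)_m \subset \bar\fm\bar R_m$, pulls back along $R_m \to \bar R_m$ to the containment
\[
(f)_m \;\subset\; \fa_m + \fm R_m
\]
in $R_m$. On the other hand, by definition $f \in \mjc\fa$ if and only if $(f)_m \subset \fa_m \pmod{\fm R_m}$, i.e.\ exactly the same containment. Therefore $f \in \mjc\fa$ if and only if $\pi(f) \in \mjc{(0)}_{\bar R}$, which is the claim.

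The only nonformal step is the identification $\bar R_m \cong R_m/\fa_m$ together with the matching of $\fm R_m$ modulo $\fa_m$ with $\bar\fm\bar R_m$; once these compatibilities are established, the rest of the argument is a direct unravelling of the definition of $m$-jet closure. I expect no obstacle beyond verifying that the generators $D_i(f)$ behave correctly under the quotient, which is immediate from the functoriality of the $m$-jet construction.
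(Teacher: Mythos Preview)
Your proposal is correct and follows essentially the same approach as the paper's proof: both use the identification $\bar R_m \cong R_m/\fa_m$, observe that $(\bar f)_m$ is the image of $(f)_m$ under this quotient, and then check that the defining containment $(f)_m \subset \fa_m + \fm R_m$ on the $R$-side matches the containment $(\bar f)_m \subset \bar\fm\,\bar R_m$ on the $\bar R$-side. Your write-up is simply a more explicit version of the same argument, spelling out the identification of $\bar\fm\,\bar R_m$ with $(\fm R_m + \fa_m)/\fa_m$ that the paper leaves implicit.
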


\begin{proof}
Let $f \in R$ be an element and let 
$\ov f$ denote the class of $f$ in $R/\fa$.
Note that the image of $(f)_m$ in $R_m/\fa_m$ is equal to the ideal $(\ov f)_m$.
The condition that $(f)_m \subset \fa_m$
modulo $\fm R_m$ is equivalent to the condition that $(\ov f)_m =0$ modulo $\fm R_m/\fa_m$. 
It follows that $f \in \mjc\fa$ if and only if $\ov f$ is in the $m$-jet closure
of the zero ideal of $R/\fa$. 
\end{proof}

In view of this fact, in order to study properties
of $m$-jet closures we can reduce to the case of the zero ideal $(0) \subset R$, 
after replacing $R$ by $R/\fa$. 
In this special case, the $m$-jet closure can be expressed in terms of the universal $m$-jet
homomorphism. 

Recall that the universal $m$-jet
\[
\m_m \colon X_m \hat\times \Spf k[t]/(t^{m+1}) \to X
\] 
is defined by the ring homomorphism
\[
\m_m^* \colon  R \to R_m[t]/(t^{m+1}), \quad f \mapsto \sum_{i=0}^m D_i(f)t^i.
\]
We denote by $\l_m$ the restriction of $\m_m$ to the scheme of local $m$-jets $X_m^o$, 
and consider the corresponding ring homomorphism
\[
\l_m^* \colon R \to (R_m/\fm R_m)[t]/(t^{m+1}).
\]

\begin{lem}
\label{th:mjc=ker}
We have
\[
\mjc{(0)} = \ker \l_m^*.
\]
\end{lem}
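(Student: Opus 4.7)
The strategy is simply to unwind the two definitions and observe that they match on the nose; the ``proof'' is really a compatibility check, so the only task is to make the matching between the two descriptions of the same condition explicit.

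First I would note that, by definition, the ideal $(0)_m \subset R_m$ is zero, so
\[
\mjc{(0)} = \{ f \in R \mid (f)_m \subset \fm R_m \}.
\]
Since $(f)_m$ is by definition generated by the Hasse--Schmidt derivations $D_i(f)$ for $0 \le i < m+1$, the containment $(f)_m \subset \fm R_m$ is equivalent to the conditions $D_i(f) \in \fm R_m$ for all such $i$, i.e.\ to $\ov{D_i(f)} = 0$ in $R_m/\fm R_m$ for every $i$.

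Next I would compare this with $\ker\l_m^*$. Since $\l_m^*$ is the composition of $\m_m^*$ with the reduction $R_m[t]/(t^{m+1}) \to (R_m/\fm R_m)[t]/(t^{m+1})$, the formula for $\m_m^*$ gives
\[
\l_m^*(f) = \sum_{i=0}^{m} \ov{D_i(f)}\, t^i \in (R_m/\fm R_m)[t]/(t^{m+1}),
\]
and this element vanishes precisely when $\ov{D_i(f)} = 0$ for every $i$, matching the condition obtained above. Assembling these observations yields $\mjc{(0)} = \ker\l_m^*$.

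The case $m = \infty$ is handled identically, using the convention $A[t]/(t^\infty) = A[[t]]$ and the description of $X_\infty^o$ as the fiber of $X_\infty \to X$ over $o$; the same formula for $\l_\infty^*(f)$ as a power series in $t$ applies. There is really no main obstacle here: all the content is packed into the chosen formalism for Hasse--Schmidt derivations and the universal jet, so I only need to make sure the notation is used consistently across both conditions.
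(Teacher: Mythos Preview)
Your proposal is correct and follows essentially the same approach as the paper: both proofs simply unwind the definitions of $\mjc{(0)}$ and $\l_m^*$ and observe that the conditions coincide. The paper's version is terser---it writes the single line $\mjc{(0)} = \{ f \in R \mid (f)_m = 0 \Mod{\fm R_m} \} = \ker \l_m^*$---while you spell out the intermediate step of expanding $(f)_m$ and $\l_m^*(f)$ in terms of the Hasse--Schmidt derivations $D_i(f)$, which is a perfectly reasonable level of detail.
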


\begin{proof}
It suffices to observe that, by definition,
\[
\mjc{(0)} = \{ f \in R \mid (f)_m = 0 \Mod{\fm R_m} \} = \ker \l_m^*.
\]
\end{proof}

The next property provides a geometric characterization of $m$-jet closures. 

\begin{prop}
\label{th:geom-char}
The $m$-jet closure $\mjc\fa$ of an ideal $\fa \subset R$
is the largest ideal $\fb \subset R$ such that $V(\fb)_m^o = V(\fa)_m^o$. 
\end{prop}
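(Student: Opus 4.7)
The plan is to translate the scheme-theoretic equality $V(\fb)_m^o = V(\fa)_m^o$ into an equality of ideals in $R_m$, and then compare directly with the defining condition of $\mjc\fa$. \cref{th:intrinsic} is available for reducing to the case $\fa=(0)$, but it is not strictly needed here, as the argument works uniformly in $\fa$.

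First I would unpack the subscheme structure on both sides. Since $X_m^o = \Spec R_m/\fm R_m$ and $V(\fc)_m \subset X_m$ is cut out by $\fc_m$ for any ideal $\fc \subset R$, the intersection $V(\fc)_m^o = V(\fc)_m \cap X_m^o$ is the closed subscheme of $X_m^o$ defined by the image of $\fc_m$ modulo $\fm R_m$. Hence
\[
V(\fb)_m^o = V(\fa)_m^o \iff \fa_m + \fm R_m = \fb_m + \fm R_m.
\]
Next I would verify that $\fb = \mjc\fa$ realizes this equality. The containment $\fa \subset \mjc\fa$ is immediate from the definition and gives $\fa_m + \fm R_m \subset \mjc\fa_m + \fm R_m$. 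Conversely, $\mjc\fa_m$ is generated by the ideals $(f)_m$ as $f$ ranges over $\mjc\fa$, and by the very definition of $\mjc\fa$ each such $(f)_m$ is contained in $\fa_m + \fm R_m$; this yields the reverse inclusion, so $V(\mjc\fa)_m^o = V(\fa)_m^o$.

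Finally, for the maximality clause, I would argue as follows. Suppose $\fb \subset R$ satisfies $\fb_m + \fm R_m = \fa_m + \fm R_m$. For any $f \in \fb$ one has $(f)_m \subset \fb_m \subset \fa_m + \fm R_m$, so $f \in \mjc\fa$; hence $\fb \subset \mjc\fa$. I do not expect a serious obstacle here: the proposition is essentially an unwinding of the definitions. The one point worth isolating is the identity $\fc_m = \sum_{f \in \fc}(f)_m$ of ideals in $R_m$, which is what lets the generator-level condition in the definition of $\mjc\fa$ match up cleanly with the ideal-level condition coming from the equality of local jet schemes.
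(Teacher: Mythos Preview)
Your argument is correct and is essentially the direct unpacking the paper's proof also performs, just organized differently. The paper first reduces via \cref{th:intrinsic} to the case $\fa=(0)$, invokes \cref{th:mjc=ker} to see that $\mjc{(0)}$ is an ideal (as the kernel of $\l_m^*$), and then reads off maximality from the geometric description $\mjc{(0)}=\{f\mid V(f)_m^o=X_m^o\}$. You instead stay with a general $\fa$, translate the scheme equality into $\fa_m+\fm R_m=\fb_m+\fm R_m$, and check both that $\mjc\fa$ achieves this and that any such $\fb$ sits inside $\mjc\fa$; the identity $\fc_m=\sum_{f\in\fc}(f)_m$ is exactly what makes your elementwise condition match the ideal-level one.

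One small point to make explicit: your write-up tacitly uses that $\mjc\fa$ is an ideal when you form $V(\mjc\fa)_m^o$ and speak of ``the largest ideal''. The paper handles this via \cref{th:mjc=ker}. In your framework it also follows, but you should say why: either note that for $f\in\mjc\fa$ the principal ideal $(f)$ already satisfies $(f)_m\subset\fa_m+\fm R_m$, so $\mjc\fa$ is the union (hence the sum) of all ideals $\fb$ with $\fb_m\subset\fa_m+\fm R_m$ and is therefore itself such an ideal; or observe directly from the Leibniz rule for Hasse--Schmidt derivations that $\mjc\fa$ is closed under $R$-linear combinations. With that remark added, your proof is complete and arguably more self-contained than the paper's, since it avoids the detour through \cref{th:intrinsic} and \cref{th:mjc=ker}.
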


\begin{proof}
By \cref{th:intrinsic}, it suffices to prove the proposition for the zero ideal $(0)$
of $R$. The fact that $\mjc{(0)}$ is an ideal is clear by \cref{th:mjc=ker}. 
The second assertion follows by the geometric reinterpretation of
the definition of $m$-jet closure:
\[
\mjc\fa = \{ f \in R \mid V(f)_m^o \supset V(\fa)_m^o \}.
\]
In the case of the zero ideal, we have 
\[
\mjc{(0)} = \{ f \in R \mid V(f)_m^o = X_m^o \},
\]
and this implies the assertion. 
\end{proof}

\begin{rmk}
Now that we know that the $m$-jet closure of an ideal is itself an ideal, we can 
rephrase their intrinsic property by saying that,
for every ideal $\fa \subset R$, there is a canonical isomorphism
of $k$-algebras
\[
\frac{R}{\mjc\fa} \cong \frac{R/\fa}{\mjc{((0)R/\fa)}}.
\]
\end{rmk}

The next corollary implies that the $m$-jet closure
is indeed a closure operation.

\begin{cor}
\label{th:closure-operation}
For any ideal $\fa \subset R$ we have 
\[
\fa \subset \mjc\fa = \mjc{(\mjc\fa)}.
\]
\end{cor}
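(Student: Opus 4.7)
The plan is to establish the inclusion and the idempotence separately, both as short formal consequences of what has already been set up. The key tool will be the geometric characterization in Proposition \ref{th:geom-char} identifying $\mjc\fa$ with the largest ideal $\fb \subset R$ such that $V(\fb)_m^o = V(\fa)_m^o$.

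First I would handle $\fa \subset \mjc\fa$ directly from the definition: if $f \in \fa$, then the Hasse--Schmidt derivatives $D_i(f)$ lie in $\fa_m$ by construction, so certainly $(f)_m \subset \fa_m$ holds modulo $\fm R_m$, placing $f$ in $\mjc\fa$. This inclusion is essentially tautological and needs nothing more than the definition of $\fa_m$.

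For the equality $\mjc\fa = \mjc{(\mjc\fa)}$, one direction is automatic from the first step applied to the ideal $\mjc\fa$ itself, yielding $\mjc\fa \subset \mjc{(\mjc\fa)}$. For the reverse inclusion, I would set $\fb := \mjc\fa$ and invoke Proposition \ref{th:geom-char}: that proposition tells us $V(\fb)_m^o = V(\fa)_m^o$, so any ideal $\fc$ satisfying $V(\fc)_m^o = V(\fb)_m^o$ automatically satisfies $V(\fc)_m^o = V(\fa)_m^o$, and hence is contained in $\mjc\fa = \fb$ by the maximality clause of the same proposition. Taking $\fc = \mjc\fb = \mjc{(\mjc\fa)}$ then gives $\mjc{(\mjc\fa)} \subset \mjc\fa$.

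There is no real obstacle here: the corollary is essentially the formal content of Proposition \ref{th:geom-char}, once one observes that ``largest ideal with a prescribed geometric property'' is automatically idempotent under the associated closure operation. The only small thing to be careful about is that Proposition \ref{th:geom-char} is already stated for an arbitrary $\fa$, so the argument runs uniformly and no reduction to the zero ideal via Lemma \ref{th:intrinsic} is required.
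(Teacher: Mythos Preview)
Your proof is correct and follows essentially the same approach as the paper, which simply states that both the inclusion and the equality are immediate consequences of Proposition~\ref{th:geom-char}. The only minor difference is that you derive $\fa \subset \mjc\fa$ directly from the definition rather than from the maximality clause of Proposition~\ref{th:geom-char}, but this is a trivial variation.
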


\begin{proof}
Both the inclusion $\fa \subset \mjc\fa$ and the equality $\mjc\fa = \mjc{(\mjc\fa)}$
are immediate consequences of \cref{th:geom-char}.
\end{proof}

\begin{defi}
We say that an ideal $\fa \subset R$ is \emph{$m$-jet closed}
if $\fa = \mjc\fa$. When $m = \infty$ (where the condition can be written
as $\fa = \ac\fa$), we also say that $\fa$ is \emph{arc closed}.
\end{defi}

\begin{cor}
For two ideals $\fa,\fb \subset R$
we have $\mjc\fa = \mjc\fb$ if and only if 
$V(\fa)_m^o = V(\fb)_m^o$. 
\end{cor}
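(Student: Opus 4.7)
The plan is to deduce this corollary directly from the geometric characterization given in \cref{th:geom-char}, together with the idempotence property in \cref{th:closure-operation}; essentially no new ideas are needed.

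For the ``only if'' direction, I would argue as follows. By \cref{th:geom-char}, the ideal $\mjc\fa$ is itself an ideal whose scheme of local $m$-jets at $o$ equals $V(\fa)_m^o$ (since $\mjc\fa$ is the largest such ideal, it in particular belongs to the family, so $V(\mjc\fa)_m^o = V(\fa)_m^o$). Likewise $V(\mjc\fb)_m^o = V(\fb)_m^o$. Assuming $\mjc\fa = \mjc\fb$, these two equalities immediately give $V(\fa)_m^o = V(\fb)_m^o$.

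For the ``if'' direction, suppose $V(\fa)_m^o = V(\fb)_m^o$. By \cref{th:geom-char}, $\mjc\fa$ is the largest ideal $\fc \subset R$ with $V(\fc)_m^o = V(\fa)_m^o$, while $\mjc\fb$ is the largest ideal $\fc \subset R$ with $V(\fc)_m^o = V(\fb)_m^o$. Since the two conditions on $\fc$ coincide, the two ``largest'' ideals must be equal, yielding $\mjc\fa = \mjc\fb$.

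There is no genuine obstacle here: the real content was packaged into \cref{th:geom-char}, so the corollary is a one-line consequence and the writeup only needs to make the two implications explicit. If desired, one could phrase the argument slightly more symmetrically by invoking \cref{th:closure-operation}: the equality $\mjc\fa = \mjc{(\mjc\fa)}$ together with the maximality statement shows that $\mjc\fa$ depends only on the local $m$-jet scheme $V(\fa)_m^o$, which is precisely the content of the corollary.
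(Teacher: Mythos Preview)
Your proof is correct and follows exactly the paper's approach: the paper simply says ``This is also an immediate consequence of \cref{th:geom-char},'' and your write-up just unpacks the two implications from that characterization. Nothing to add.
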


\begin{proof}
This is also an immediate consequences of \cref{th:geom-char}.
\end{proof}

\begin{prop}
\label{th:m-adic}
For any ideal $\fa \subset R$ and any $m \in \N$, we have $\fa + \fm^{m+1} \subset \mjc\fa$.
\end{prop}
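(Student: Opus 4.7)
The plan is to reduce to the case $\fa = (0)$ using the intrinsicness established in \cref{th:intrinsic}, and then to exploit the universal jet description $\mjc{(0)} = \ker \l_m^*$ from \cref{th:mjc=ker} together with a simple degree count in $t$.

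Since \cref{th:closure-operation} already records $\fa \subset \mjc\fa$, only the inclusion $\fm^{m+1} \subset \mjc\fa$ requires argument. By \cref{th:intrinsic}, $\mjc\fa$ is the preimage under the quotient map $R \to R/\fa$ of the $m$-jet closure of the zero ideal in $R/\fa$, and the image of $\fm$ in $R/\fa$ is the maximal ideal of $R/\fa$. So, after replacing $R$ by $R/\fa$, it suffices to prove that $\fm^{m+1} \subset \mjc{(0)}$.

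Next I would invoke \cref{th:mjc=ker}, which identifies $\mjc{(0)} = \ker \l_m^*$, where
\[
\l_m^* \colon R \to (R_m/\fm R_m)[t]/(t^{m+1}), \quad f \mapsto \sum_{i=0}^m D_i(f)\, t^i \Mod{\fm R_m},
\]
is a ring homomorphism. The crucial observation is that $D_0 \colon R \to R_m$ is the pullback along $\p_m \colon X_m \to X$, so $D_0(g) \in \fm R_m$ for every $g \in \fm$. Hence $\l_m^*(g)$ has trivial constant term, i.e., $\l_m^*(\fm) \subset t \cdot (R_m/\fm R_m)[t]/(t^{m+1})$.

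Because $\l_m^*$ is a ring homomorphism, it follows immediately that $\l_m^*(\fm^{m+1}) \subset t^{m+1} \cdot (R_m/\fm R_m)[t]/(t^{m+1}) = 0$, giving $\fm^{m+1} \subset \ker \l_m^* = \mjc{(0)}$. There is no real obstacle here: once one adopts the correct reformulation via $\l_m^*$, the argument amounts to a one-line bookkeeping on the $t$-degree, and the only point worth highlighting is the reduction step via intrinsicness, which upgrades the conclusion from the zero ideal to an arbitrary $\fa$.
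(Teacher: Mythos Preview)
Your argument is correct and follows essentially the same route as the paper: reduce to $\fa=(0)$ via \cref{th:intrinsic}, identify $\mjc{(0)}=\ker\l_m^*$ via \cref{th:mjc=ker}, and use $\l_m^*(\fm)\subset(t)$ to conclude $\l_m^*(\fm^{m+1})=0$. You supply a bit more justification for the inclusion $\l_m^*(\fm)\subset(t)$ (via $D_0=\p_m^*$) than the paper does, but otherwise the proofs coincide.
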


\begin{proof}
By \cref{th:intrinsic}, it suffices to prove the case where $\fa = (0) \subset R$.
Recall that $\mjc{(0)} = \ker\l_m^*$ by \cref{th:mjc=ker}. 
Since $\l_m^*(\fm) \subset (t)$, we have
$\l_m^*(\fm^{m+1}) \subset (t^{m+1}) = 0$, and therefore $\fm^{m+1} \subset \ker\l_m^* = \mjc{(0)}$.
\end{proof}

\begin{rmk}
\label{r:non-mjc}
\cref{th:m-adic} implies in particular that even in nice situations (e.g., assuming that $R$ is
a Noetherian $k$-algebra) the $m$-jet closure operation on ideals is, for finite $m$, 
a nontrivial operation. For instance, if $\fa$ is not $\fm$-primary, then 
$\fa \subsetneq \mjc\fa$ for all $m \in \N$.
\end{rmk}

The following is an example where $\fa + \fm^{m+1} \ne \mjc\fa$.

\begin{eg}
Let $\fa = (x^2+y^3) \subset R = k[[x,y]]$, where $k$ is a field of characteristic $\ne 2,3$.
A direct computation shows that $\cjc\fa 4 = (x^2 + y^3,xy^3)$, and hence 
$\fa + \fm^5 \ne \cjc\fa 4$. To see this, we introduce the 
coordinates $x_i := D_i(x)$ and $y_i := D_i(y)$ on the jet schemes of $X = \Spec R$, 
where $(D_i)_{i \ge 0}$ is the sequence of universal Hasse--Schmidt derivations. 
Denoting by $o \in X$ the closed point, we have
$X_4^o = \Spec k[x_i,y_i \mid 1 \le i \le 4]$, and the ideal 
defining $V(\fa)_4^o$ in there is generated by the three elements
\[
x_1^2,\quad 
2x_2x_1 + y_1^2,\quad 
2x_3x_1 + x_2^2 + 3y_2y_1^2.
\]
One can check that $x_1y_1^3$ is in this ideal, and this implies that
$xy^3 \in \cjc\fa 4$. On the other hand, $xy^3 \not\in \fa + \fm^5$. 
\end{eg}

Recall that we defined the arc closure to be $\ac\fa := \cjc\fa\infty$. The following proposition
shows how the arc closure compares to the other $m$-jet closures. 

\begin{prop}
\label{th:cap-mjc=ac}
For any ideal $\fa \subset R$, we have 
\[
\bigcap_{m \in \N} \mjc\fa = \ac\fa.
\]
\end{prop}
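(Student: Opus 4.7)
The plan is to reduce to the case of the zero ideal and then exploit a weight grading on the jet rings. By \cref{th:intrinsic} it suffices to prove $\bigcap_{m \in \N} \mjc{(0)} = \ac{(0)}$, and by \cref{th:mjc=ker} both sides are kernels: $f \in \mjc{(0)}$ is equivalent to $D_i(f) \in \fm R_m$ for $0 \le i \le m$, while $f \in \ac{(0)}$ is equivalent to $D_i(f) \in \fm R_\infty$ for every $i \ge 0$. The containment $\bigcap_m \mjc{(0)} \subset \ac{(0)}$ is immediate: for each $i$, specializing the hypothesis to $m=i$ gives $D_i(f) \in \fm R_i$, which sits inside $\fm R_\infty$ via $R_i \hookrightarrow R_\infty$.

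For the reverse containment, the naive route of restricting $\fm R_\infty$ back to $R_m$ fails, since in general $R_m/\fm R_m \to R_\infty/\fm R_\infty$ need not be injective. My plan is to use instead the weight grading on $R_m$ (for $m \in \N \cup \{\infty\}$) arising from the multiplicative-group action on $k[t]/(t^{m+1})$ by $t \mapsto \lambda t$. Concretely, after picking a polynomial presentation $R = S/I$ with $S = k[x_\a]$, the ring $R_m = S_m/I_m$ becomes $\Z_{\ge 0}$-graded by declaring $\deg x_{\a,k} = k$, because every generator $D_k(g)$ of $I_m$ is weight-homogeneous of weight $k$.

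Three observations then do the work: (i) $\fm$ lies entirely in weight $0$, so $\fm R_m$ is graded with weight-$w$ piece $\fm \cdot R_m^{(w)}$; (ii) by the Leibniz rule $D_i(fg) = \sum_{j+k=i} D_j(f) D_k(g)$, the element $D_i(f)$ is homogeneous of weight exactly $i$ for every $f \in R$; and (iii) for $w \le i \le m$ (allowing $m = \infty$), the inclusion $R_i \hookrightarrow R_m$ is an isomorphism on each weight-$w$ piece, since only the variables $x_{\a,k}$ and generators $D_k(g)$ with $k \le w \le i$ contribute to weight $w$. Putting (i)--(iii) together, $D_i(f) \in \fm R_m$ is equivalent to $D_i(f) \in \fm R_i$ for every $m \ge i$, including $m = \infty$, so both conditions of the proposition collapse to the single requirement ``$D_i(f) \in \fm R_i$ for all $i$''. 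The main obstacle will be observation (iii): one must rule out, by a careful weight count, the appearance of new weight-$\le i$ relations in $I_m$ beyond those already in $I_i$. This is a direct check from the explicit generators $D_k(g)$ of $I_m$, but has to be carried out with an arbitrary (possibly uncountable-variable) polynomial presentation of $R$ in order to cover the full generality of the statement.
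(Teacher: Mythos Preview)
Your plan is correct and takes a genuinely different route from the paper. After the common reduction to the zero ideal, the paper proves $\ac{(0)} \subset \bigcap_m \mjc{(0)}$ by a functorial construction: given a local $m$-jet $\gamma^* \colon R \to A[s]/(s^{m+1})$ detecting $f$, it composes with the injection $s \mapsto st$ into $(A[s]/(s^{m+1}))[[t]]$ to produce a local arc still detecting $f$. You instead exploit the weight grading on $R_m$ induced by the multiplicative-group action $t \mapsto \lambda t$, showing that the weight-$w$ piece of $R_m$ stabilizes once $m \ge w$; this makes the membership $D_i(f) \in \fm R_m$ independent of $m \ge i$. The two arguments are close relatives---the paper's reparametrization $s \mapsto st$ is a formal avatar of the same scaling action---but the executions differ: the paper's is shorter and presentation-free, while yours yields the structural bonus that $R_m/\fm R_m \to R_\infty/\fm R_\infty$ is injective in each weight $\le m$, so your stated worry about this map is in fact resolved by your own argument. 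Your ``main obstacle'' (iii) is indeed routine: for $w \le m$ the weight-$w$ piece of $I_m$ is spanned by products $D_k(g)\cdot h$ with $k \le w$ and $h \in S_m^{(w-k)} = S_w^{(w-k)}$, hence equals $I_w^{(w)}$.
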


\begin{proof}
By \cref{th:intrinsic}, we reduce to prove the formula when $\fa$ is the zero ideal of $R$, 
where by \cref{th:mjc=ker} the proposition is equivalent to the assertion that 
\[
\bigcap_{m \in \N} 
\ker(\l_m^*) = \ker(\l_\infty^*).
\]

To show one inclusion, let $f \in R$ be an element such that 
$\l_\infty^*(f) \ne 0$ in $(R_\infty/\fm R_\infty)[[t]]$. 
Then the image
$\l_\infty^*(f)$ in $(R_\infty/\fm R_\infty)[t]/(t^{m+1})$ is nonzero for all sufficiently large 
integers $m$.
Since, for any such $m$, the resulting map $R \to (R_\infty/\fm R_\infty)[t]/(t^{m+1})$
factors through $(R_m/\fm R_m)[t]/(t^{m+1})$ by the universality of $\l_m^*$, 
it follows that $\l_m^*(f) \ne 0$, and hence we have 
$f \not\in\bigcap_{m \in \N}\ker(\l_m^*)$.

For the reverse inclusion, assume that $f \not\in \ker(\l_m^*)$ for some $m \in \N$. 
Then there exists an $A$-valued $m$-jet $\g \colon \Spec A[s]/(s^{m+1}) \to \Spec R$, 
for some $k$-algebra $A$, such that $f$ is not in the kernel of the homomorphism
\[
\g^* \colon R \to A[s]/(s^{m+1}).
\]
By further composing with the homomorphism
\[
A[s]/(s^{m+1}) \to \big(A[s]/(s^{m+1})\big)[[t]], \quad s \mapsto st,
\]
which is injective, we obtain an $A[s]/(s^{m+1})$-valued 
arc $\alpha \colon \Spec\big(A[s]/(s^{m+1})\big)[[t]] \to \Spec R$,
and $f$ is not in the kernel of 
\[
\a^* \colon R \to \big(A[s]/(s^{m+1})\big)[[t]].
\]
This implies that $f \not\in \ker(\l_\infty^*)$.
\end{proof}

\section{Jet support closures}

As in the previous section, let $(R,\fm)$ be a local algebra over a field $k$, let
$X = \Spec R$, and let $o \in X$ be the closed point.   
By looking at the local jet schemes of $(X,o)$ with their reduced structure, 
we introduce the following variant of the $m$-jet closure. 

\begin{defi}
For any ideal $\fa \subset R$ and any $m \in \N \cup \{\infty\}$, we define
the \emph{$m$-jet support closure} of $\fa$ to be 
\[
\mjsc\fa := \{ f \in R \mid (f)_m \subset \sqrt{\fa_m} \Mod{\fm R_m} \}.
\]
For $m=\infty$, we also call the ideal 
\[
\asc\fa := \cjsc\fa\infty
\]
the \emph{arc support closure} of $\fa$. 
\end{defi}

The $m$-jet support closure operation satisfies many analogous properties 
of the $m$-jet closure operation studied in the previous section that can 
be proven in a similar way. 

For instance, 
the $m$-jet support closure is intrinsic. That is, 
for any ideal $\fa \subset R$, the $m$-jet support closure $\mjc\fa$ 
is the inverse images via the quotient map $R \to R/\fa$ of the 
$m$-jet support closure of the zero ideal of $R/\fa$.

Furthermore, the $m$-jet support closure of the zero ideal of $R$ is given by 
\[
\mjsc{(0)} = \ker \n_m^*,
\]
where
\[
\n_m^* \colon R \to (R_m/\fm R_m)_\red[t]/(t^{m+1})
\]
is the ring homomorphism associated with the
restriction $\n_m$ of the universal $m$-jet morphism $\m_m$ 
to the reduced scheme of local $m$-jets $(X_m^o)_\red$. 

It follows from these two properties that for any ideal $\fa \subset R$ the 
$m$-jet support closure $\mjsc\fa$ is an ideal. In general, we have
\[
\fa \subset \mjsc\fa = \mjsc{(\mjsc\fa)},
\]
which says that the $m$-jet support closure is a closure operation among the ideals of $R$.

\begin{defi}
We say that an ideal $\fa \subset R$ is \emph{$m$-jet support closed}
if $\fa = \mjsc\fa$. When $m = \infty$ (where the condition can be written
as $\fa = \asc\fa$), we also say that $\fa$ is \emph{arc support closed}.
\end{defi}

The following comparison between $m$-jet support closure and $m$-jet closure is clear from the definitions. 

\begin{prop}
\label{th:mjc<mjsc}
For every ideal $\fa \subset R$, 
we have $\mjc\fa \subset \mjsc\fa$. 
\end{prop}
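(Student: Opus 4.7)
The plan is to give an essentially one-line argument directly from the definitions. Both $\mjc\fa$ and $\mjsc\fa$ are defined by containment, modulo $\fm R_m$, of the ideal $(f)_m$ inside either $\fa_m$ or its radical $\sqrt{\fa_m}$, so the containment $\fa_m \subset \sqrt{\fa_m}$ inside $R_m$ should do the work.

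More precisely, I would interpret the shorthand ``$(f)_m \subset \fa_m \Mod{\fm R_m}$'' as the assertion that the image of $(f)_m$ in $R_m/\fm R_m$ lies inside the image of $\fa_m$, or equivalently that
\[
(f)_m \subset \fa_m + \fm R_m \inside R_m.
\]
Since $\fa_m \subset \sqrt{\fa_m}$, we have $\fa_m + \fm R_m \subset \sqrt{\fa_m} + \fm R_m$, and therefore any $f \in R$ satisfying the first condition automatically satisfies $(f)_m \subset \sqrt{\fa_m} + \fm R_m$, which is the defining condition for $f \in \mjsc\fa$. This gives $\mjc\fa \subset \mjsc\fa$.

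There is really no obstacle here; the only thing worth being careful about is the meaning of the ``$\Mod{\fm R_m}$'' shorthand, which I would spell out as above to make the implication transparent. If one prefers a more conceptual version, the geometric characterization in \cref{th:geom-char} gives the same conclusion: an ideal $\fb$ with $V(\fb)_m^o = V(\fa)_m^o$ as schemes automatically satisfies $V(\fb)_m^o = V(\fa)_m^o$ as reduced schemes, so the largest $\fb$ with the former property is contained in the largest $\fb$ with the latter property.
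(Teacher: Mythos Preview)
Your argument is correct and matches the paper's approach: the paper simply states that the inclusion is ``clear from the definitions'' and gives no further proof, which is exactly the one-line verification you spell out. Your added geometric remark via \cref{th:geom-char} is a fine alternative but not needed.
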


Geometrically, we have
\[
\mjsc\fa = \{ f \in R \mid (V(f)_m^o)_\red \supset (V(\fa)_m^o)_\red \}.
\]
In the case of the zero ideal, we have 
\[
\mjsc{(0)} = \{ f \in R \mid (V(f)_m^o)_\red = (X_m^o)_\red \},
\]
Using these facts, the same argument as in the proof of \cref{th:geom-char}
gives the following property. 

\begin{prop}
\label{th:geom-char-mjsc}
The $m$-jet support closure $\mjsc\fa$ of an ideal $\fa \subset R$
is the largest ideal $\fb \subset R$ such that $(V(\fb)_m^o)_\red = (V(\fa)_m^o)_\red$.
\end{prop}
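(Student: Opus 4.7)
The plan is to mirror the proof of \cref{th:geom-char}, replacing the schemes of local $m$-jets by their reductions throughout. First I would invoke the intrinsic property of $\mjsc$ (asserted just after the definition of $m$-jet support closure) to reduce to the case $\fa = (0)$: for any $\fb \supset \fa$, the closed immersion $V(\fa) \inj X$ identifies $V(\fb)_m^o$ with the corresponding local jet scheme computed inside $\Spec R/\fa$, so the statement for $\fa$ in $R$ is equivalent to the one for the zero ideal of $R/\fa$. The fact that $\mjsc\fa$ is itself an ideal is part of the discussion preceding the proposition, so there is no issue in speaking of the largest ideal with the required property.

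Once in the case $\fa = (0)$, I would use the description
\[
\mjsc{(0)} = \{ f \in R \mid (V(f)_m^o)_\red = (X_m^o)_\red \}
\]
displayed just before the statement to get maximality for free: if $\fb \subset R$ is any ideal with $(V(\fb)_m^o)_\red = (X_m^o)_\red$, then for each $f \in \fb$ the chain $V(\fb)_m^o \subset V(f)_m^o \subset X_m^o$ reduces to $(X_m^o)_\red \subset (V(f)_m^o)_\red \subset (X_m^o)_\red$, forcing $f \in \mjsc{(0)}$ and hence $\fb \subset \mjsc{(0)}$.

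It remains to verify that $\mjsc{(0)}$ itself satisfies $(V(\mjsc{(0)})_m^o)_\red = (X_m^o)_\red$, and this is the step that requires a small calculation. By definition of $\mjsc{(0)}$, each $f \in \mjsc{(0)}$ satisfies $D_0(f), \dots, D_m(f) \in \sqrt{(0)} + \fm R_m \subset \sqrt{\fm R_m}$ inside $R_m$. Therefore the generators of the ideal $(\mjsc{(0)})_m$ all lie in $\sqrt{\fm R_m}$, which gives $(\mjsc{(0)})_m + \fm R_m \subset \sqrt{\fm R_m}$; taking radicals yields $\sqrt{(\mjsc{(0)})_m + \fm R_m} = \sqrt{\fm R_m}$, which is exactly the desired equality of underlying reduced subschemes. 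This radical bookkeeping is the only substantive divergence from the proof of \cref{th:geom-char}, so I do not expect any real obstacle beyond tracking the radical correctly.
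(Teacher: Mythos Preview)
Your proposal is correct and follows essentially the same approach as the paper, which in fact gives no separate proof at all: it simply asserts that ``the same argument as in the proof of \cref{th:geom-char}'' works, relying on the displayed geometric descriptions of $\mjsc\fa$ and $\mjsc{(0)}$. You have correctly expanded that reference, including the one extra point that needs care in the support version, namely the radical bookkeeping showing $(\mjsc{(0)})_m + \fm R_m$ and $\fm R_m$ have the same radical. One small wording issue: by the definition (read modulo $\fm R_m$), the $D_i(f)$ lie directly in $\sqrt{\fm R_m}$, not a priori in $\sqrt{(0)} + \fm R_m$; since you only use the former, this does not affect the argument.
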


\begin{cor}
\label{th:a-b-same-mjc}
For two ideals $\fa,\fb \subset R$,
we have $\mjsc\fa = \mjsc\fb$ if and only if 
$(V(\fa)_m^o)_\red = (V(\fb)_m^o)_\red$. 
\end{cor}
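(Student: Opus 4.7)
The plan is to derive this directly from \cref{th:geom-char-mjsc}, which already characterizes $\mjsc\fa$ as the unique largest ideal $\fb \subset R$ satisfying $(V(\fb)_m^o)_\red = (V(\fa)_m^o)_\red$. Once that characterization is in hand, the corollary is essentially a formal restatement, so no substantive new work is required; the only thing to verify is that the two directions of the biconditional follow cleanly.

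For the forward direction, I would start by assuming $\mjsc\fa = \mjsc\fb$. Applying \cref{th:geom-char-mjsc} once to $\fa$ gives $(V(\mjsc\fa)_m^o)_\red = (V(\fa)_m^o)_\red$, and applying it again to $\fb$ gives $(V(\mjsc\fb)_m^o)_\red = (V(\fb)_m^o)_\red$. Since the left-hand sides of these equalities agree by hypothesis, the right-hand sides must agree too, yielding $(V(\fa)_m^o)_\red = (V(\fb)_m^o)_\red$.

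For the reverse direction, assume that $(V(\fa)_m^o)_\red = (V(\fb)_m^o)_\red$. Then \cref{th:geom-char-mjsc} characterizes $\mjsc\fa$ as the largest ideal whose subscheme has this common reduced scheme of local $m$-jets at $o$, and likewise for $\mjsc\fb$. Since the defining condition is identical, the maximal ideal satisfying it must be the same, giving $\mjsc\fa = \mjsc\fb$.

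There is no real obstacle here; the corollary is formal given \cref{th:geom-char-mjsc}, in complete parallel to how the analogous statement for $m$-jet closures followed from \cref{th:geom-char}. If anything, the only point worth mentioning is that the characterization in \cref{th:geom-char-mjsc} produces a unique maximal ideal with the stated property, which is what lets us promote the equality of reduced local jet schemes to an equality of closures.
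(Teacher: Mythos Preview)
Your proposal is correct and matches the paper's approach: the paper presents this corollary without an explicit proof, treating it as an immediate consequence of \cref{th:geom-char-mjsc}, in complete parallel to how the analogous corollary for $m$-jet closures is derived from \cref{th:geom-char}. Your spelling out of both directions is a faithful expansion of that one-line justification.
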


There is one proof from the previous section that does not have an analogue
for $m$-jet support closure, and that is the proof of \cref{th:cap-mjc=ac}. 
For this reason, it is convenient to give the following definition. 

\begin{defi}
The \emph{jet support closure} of an ideal $\fa \subset R$ is the ideal
\[
\jsc\fa := \bigcap_{m \in \N} \mjsc\fa.
\]
\end{defi}

The fact that this is a closure operation is proven in the next proposition. 

\begin{prop}
\label{th:jsc-closure-operation}
For every ideal $\fa \subset R$, we have 
\[
\fa \subset \jsc\fa = \jsc{(\jsc\fa)}.
\]
\end{prop}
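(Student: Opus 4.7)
The plan is to imitate the argument for the analogous fact about $m$-jet closure (\cref{th:closure-operation}), but with the extra twist that $\jsc\fa$ is defined as an intersection, so the proof proceeds by first comparing at each finite level and then intersecting.

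First, the inclusion $\fa \subset \jsc\fa$ is immediate from the fact that $\fa \subset \mjsc\fa$ for every $m \in \N$ (a property recorded earlier in the section), so intersecting over $m$ preserves the inclusion.

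The core step is to show that $\mjsc{(\jsc\fa)} = \mjsc\fa$ for every $m \in \N$; once this is established, intersecting over $m$ gives $\jsc{(\jsc\fa)} = \jsc\fa$. To prove this equality of $m$-jet support closures, I would exploit the sandwich $\fa \subset \jsc\fa \subset \mjsc\fa$, which follows because $\jsc\fa$ is by definition contained in $\mjsc\fa$. Applying $V(\,\cdot\,)_m^o$ and reducing reverses inclusions, yielding
\[
(V(\mjsc\fa)_m^o)_\red \subset (V(\jsc\fa)_m^o)_\red \subset (V(\fa)_m^o)_\red.
\]
Since $\mjsc\fa$ is $m$-jet support closed, \cref{th:geom-char-mjsc} gives $(V(\mjsc\fa)_m^o)_\red = (V(\fa)_m^o)_\red$, so the outer terms agree and hence $(V(\jsc\fa)_m^o)_\red = (V(\fa)_m^o)_\red$. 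By \cref{th:a-b-same-mjc}, this forces $\mjsc{(\jsc\fa)} = \mjsc\fa$, as desired.

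Intersecting the resulting equalities over all $m \in \N$ yields
\[
\jsc{(\jsc\fa)} \;=\; \bigcap_{m \in \N}\mjsc{(\jsc\fa)} \;=\; \bigcap_{m \in \N}\mjsc\fa \;=\; \jsc\fa,
\]
completing the proof. There is no real obstacle here: the entire argument is formal once one notes that $\jsc\fa$ is trapped between $\fa$ and each $\mjsc\fa$, which is precisely what makes the intersection of closure operations behave like a closure operation.
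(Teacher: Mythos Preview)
Your proof is correct and follows essentially the same approach as the paper: both use the sandwich $\fa \subset \jsc\fa \subset \mjsc\fa$ together with the idempotence of each $m$-jet support closure, then intersect over $m$. The only cosmetic difference is that you route through \cref{th:geom-char-mjsc} and \cref{th:a-b-same-mjc} to obtain the equality $\mjsc{(\jsc\fa)} = \mjsc\fa$ at each level, whereas the paper uses monotonicity of $\mjsc{(\,\cdot\,)}$ to get the inclusion $\mjsc{(\jsc\fa)} \subset \mjsc{(\mjsc\fa)} = \mjsc\fa$ and then closes with the trivial reverse inclusion $\jsc\fa \subset \jsc{(\jsc\fa)}$.
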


\begin{proof}
The inclusion $\fa \subset \jsc\fa$ is clear from the definition. 
Regarding the equality, we have
\[
\jsc{(\jsc\fa)} 
= \bigcap_{m \in \N} \mjsc{\(\bigcap_{n \in \N} \njsc\fa\)}
\subset \bigcap_{m \in \N} \mjsc{(\mjsc\fa)}
= \bigcap_{m \in \N} \mjsc\fa
= \jsc\fa, 
\]
and since $\jsc\fa \subset \jsc{(\jsc\fa)}$, the two ideals are the same. 
\end{proof}

\begin{defi}
We say that an ideal $\fa \subset R$ is \emph{jet support closed} if $\fa = \jsc\fa$.
\end{defi}

Since all $m$-jet support closures are intrinsic, it follows that so is the jet support closure. 
In particular, we have the following property. 

\begin{prop}
\label{th:jsc-intrinsic}
An ideal $\fa \subset R$ is jet support closed if and only if the zero ideal of $R/\fa$ is
jet support closed. 
\end{prop}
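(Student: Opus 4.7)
The plan is to deduce the statement by pushing the intrinsic property through the intersection that defines $\jsc\fa$. Recall that earlier in this section it was pointed out (as an analogue of \cref{th:intrinsic}) that for every $m \in \N$ the $m$-jet support closure is intrinsic, meaning that
\[
\mjsc\fa = \pi^{-1}\bigl(\mjsc{((0)(R/\fa))}\bigr),
\]
where $\pi \colon R \to R/\fa$ is the quotient map. Equivalently, since $\fa \subset \mjsc\fa$ by \cref{th:jsc-closure-operation} (applied termwise, or directly from the definition), one has
\[
\mjsc\fa / \fa = \mjsc{((0)(R/\fa))}
\]
as ideals of $R/\fa$.

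Now I would simply take intersections. Because every ideal appearing in the intersection defining $\jsc\fa$ contains $\fa$, the quotient map commutes with the intersection, giving
\[
\jsc\fa / \fa \;=\; \bigcap_{m \in \N} \bigl(\mjsc\fa / \fa\bigr) \;=\; \bigcap_{m \in \N} \mjsc{((0)(R/\fa))} \;=\; \jsc{((0)(R/\fa))}.
\]
Equivalently, $\jsc\fa = \pi^{-1}\bigl(\jsc{((0)(R/\fa))}\bigr)$, which is the jet support closure analogue of the intrinsic property.

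The conclusion is then immediate: since $\fa = \ker\pi$, the condition $\fa = \jsc\fa$ is equivalent to $\jsc\fa/\fa = (0)$, and by the displayed identity this is equivalent to $\jsc{((0)(R/\fa))} = (0)$, i.e., to the zero ideal of $R/\fa$ being jet support closed. There is no real obstacle here beyond formalizing the intrinsic property for each $m$ in exactly the same manner as the proof of \cref{th:intrinsic}, which carries over essentially verbatim after replacing $\fa_m$ by $\sqrt{\fa_m}$ in the defining condition.
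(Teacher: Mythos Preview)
Your proof is correct and follows exactly the approach the paper indicates: the sentence preceding the proposition simply asserts that ``since all $m$-jet support closures are intrinsic, it follows that so is the jet support closure,'' and your argument makes this explicit by passing the intrinsic identity $\mjsc\fa = \pi^{-1}\bigl(\mjsc{((0)(R/\fa))}\bigr)$ through the intersection over $m \in \N$. The paper gives no further proof, so your write-up is essentially a spelled-out version of the one-line justification in the text.
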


An adaptation of the first part of the proof of \cref{th:cap-mjc=ac} gives
the following comparison between jet support closure and arc support closure. 
We do not know whether the reverse inclusion holds.  

\begin{prop}
For any ideal $\fa \subset R$, we have $\jsc\fa \subset \asc\fa$.
\end{prop}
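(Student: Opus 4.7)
The plan is to mirror the first half of the proof of \cref{th:cap-mjc=ac}. By the intrinsic property shared by the $m$-jet support closures for all $m \in \N \cup \{\infty\}$, it suffices to treat the case $\fa = (0)$. The statement then reduces to showing $\bigcap_{m \in \N} \ker(\n_m^*) \subset \ker(\n_\infty^*)$; equivalently, if $\n_\infty^*(f) \ne 0$ then $\n_m^*(f) \ne 0$ for some finite $m$.

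The technical core is to exhibit, for each $m \in \N$, a factoring of the $m$-th truncation of $\n_\infty^*$ through $\n_m^*$. The universality of $\m_m^*$ yields a commutative triangle
\[
R \xrightarrow{\m_m^*} R_m[t]/(t^{m+1}) \to R_\infty[t]/(t^{m+1})
\]
whose composition equals the $m$-th truncation of $\m_\infty^*$. Reducing the coefficient rings modulo $\fm$ and passing to reductions produces a commutative triangle
\[
R \xrightarrow{\n_m^*} (R_m/\fm R_m)_\red[t]/(t^{m+1}) \to (R_\infty/\fm R_\infty)_\red[t]/(t^{m+1})
\]
whose composition is the $m$-th truncation of $\n_\infty^*$. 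The only point requiring care is that the natural map $R_m/\fm R_m \to R_\infty/\fm R_\infty$ sends nilpotents to nilpotents and hence descends to a map between the reduced quotients. With the factoring in hand, the conclusion is immediate: if $\n_\infty^*(f) \ne 0$ in $(R_\infty/\fm R_\infty)_\red[[t]]$, its truncation modulo $t^{m+1}$ is nonzero for all sufficiently large $m$, forcing $\n_m^*(f) \ne 0$, and hence $f \notin \jsc{(0)}$.

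The main obstacle, which is exactly why only an inclusion is asserted, is that the reverse direction of \cref{th:cap-mjc=ac} promoted an $m$-jet $\Spec A[s]/(s^{m+1}) \to X$ to an arc via the injective substitution $s \mapsto st$. In the reduced setting, the resulting arc need not factor through the reduced local arc scheme even when the original jet factored through the reduced local $m$-jet scheme, so this step does not transport to the support version, and closing the gap would require a substantively new idea.
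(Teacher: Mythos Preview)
Your proposal is correct and follows precisely the route the paper indicates: the paper's proof consists of the single sentence ``An adaptation of the first part of the proof of \cref{th:cap-mjc=ac} gives the following comparison,'' and you have carried out exactly that adaptation, including the only new wrinkle (that the map $R_m/\fm R_m \to R_\infty/\fm R_\infty$ descends to the reduced quotients). Your closing remark on why the reverse inclusion does not go through also matches the paper's observation that the second half of the argument in \cref{th:cap-mjc=ac} has no analogue here.
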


The jet support closure compares to the arc closure as follows. 

\begin{prop}
\label{th:ac<jcs}
For any ideal $\fa \subset R$, we have $\ac\fa \subset \jsc\fa$.
\end{prop}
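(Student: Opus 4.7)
The plan is to obtain the inclusion as an immediate corollary of two earlier results, without doing any new work at the level of derivations or $A$-valued arcs. The two ingredients I would use are \cref{th:cap-mjc=ac}, which identifies the arc closure with the intersection of the finite-level jet closures, and \cref{th:mjc<mjsc}, which compares $m$-jet closure with $m$-jet support closure at each finite level.

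First, I would rewrite $\ac\fa$ using \cref{th:cap-mjc=ac} as
\[
\ac\fa = \bigcap_{m \in \N} \mjc\fa.
\]
Next, I would invoke \cref{th:mjc<mjsc} term by term: for each $m \in \N$ we have $\mjc\fa \subset \mjsc\fa$. Taking intersections of these inclusions over $m \in \N$ preserves the containment, so
\[
\ac\fa = \bigcap_{m \in \N} \mjc\fa \;\subset\; \bigcap_{m \in \N} \mjsc\fa = \jsc\fa,
\]
which is the desired inclusion.

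There is no substantive obstacle here; the statement is a direct consequence of assembling \cref{th:cap-mjc=ac} and \cref{th:mjc<mjsc}. The only mild subtlety worth flagging in the write-up is that $\jsc\fa$ is defined as the intersection over finite $m$ only (mirroring the intersection appearing in \cref{th:cap-mjc=ac}), so the two intersections are genuinely indexed by the same set and the term-by-term comparison applies without any reindexing or additional limit argument.
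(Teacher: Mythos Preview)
Your proof is correct and follows essentially the same approach as the paper: the paper's proof also combines \cref{th:mjc<mjsc} termwise with the identification $\ac\fa = \bigcap_{m\in\N}\mjc\fa$ from \cref{th:cap-mjc=ac} to obtain the inclusion into $\jsc\fa$.
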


\begin{proof}
By \cref{th:mjc<mjsc}, we have  
\[
\bigcap_{m \in \N} \mjc\fa \subset \bigcap_{m \in \N} \mjsc\fa = \jsc\fa,
\]
and hence the assertion follows from \cref{th:cap-mjc=ac}.
\end{proof}

\begin{prop}
\label{jsc<integral-closure}
Let $R$ be a local integral domain essentially of finite type over a field $k$. 
Let $\fa \subset R$ be an ideal, and
denote by $\ov\fa$ its integral closure. 
\begin{enumerate}
\item
There is an inclusion $\jsc\fa \subset \ov\fa$. 
\item
If $R$ is regular, then $\jsc\fa = \ov\fa$. 
\end{enumerate}
\end{prop}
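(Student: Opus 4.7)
For part (a) I argue by contrapositive: given $f \notin \ov\fa$, I exhibit a finite $m$ and a $K$-valued local $m$-jet $\gamma \colon R \to K[t]/(t^{m+1})$ with $\gamma(\fa) = 0$ but $\gamma(f) \ne 0$, which witnesses $f \notin \mjsc\fa$ and hence $f \notin \jsc\fa$. The input is Rees's theorem on integral closure: let $\pi \colon Y \to X = \Spec R$ be the normalized blow-up of $\fa$, so that $\fa \cdot \O_Y = \O_Y(-E)$ with $E = \sum r_j E_j$ the exceptional divisor; the hypothesis $f \notin \ov\fa$ produces a component $E_i$ with $v_i(f) < v_i(\fa)$, where $v_i = \ord_{E_i}$. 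Because $\pi$ is proper, $\pi(E_i) = V(\fp)$ for the center $\fp$ of $v_i$ on $X$, and since $R$ is local every prime lies in $\fm$, so $o \in V(\fp)$ and $E_i \cap \pi^{-1}(o) \ne \emptyset$.

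I then choose a point $q \in E_i \cap \pi^{-1}(o)$ at which $Y$ is regular, $E_i$ is a regular Cartier divisor, and $q$ avoids every other component of $E$ as well as the strict transforms of $V(f)$ and of $V(g)$ for chosen generators $g$ of $\fa$; pick a regular curve germ $C \subset Y$ through $q$ transverse to $E_i$. Completing at $q$ gives $\^{\O}_{C,q} \cong K[[s]]$ with $K = \kappa(q)$, and the composition $\alpha \colon R = \O_{X,o} \to \O_{C,q} \to K[[s]]$ is a local arc satisfying $\ord_s(\alpha(g)) = v_i(g)$ for each of the finitely many relevant $g$, by computing intersection numbers on $Y$. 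In particular $\ord_s(\alpha(f)) < \ord_s(\alpha(\fa))$, and truncating $\alpha$ modulo $s^{m+1}$ with $m := \ord_s(\alpha(f))$ yields the desired $\gamma$.

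For part (b) assume $R$ is regular and $f \in \ov\fa$, and fix a local $m$-jet $\gamma \colon R \to K[t]/(t^{m+1})$ with $\gamma(\fa) = 0$; I must prove $\gamma(f) = 0$. Smoothness of $X$ at $o$ implies that the truncation $X_\infty^o \to X_m^o$ is a projection of affine spaces, hence surjective on $K$-points, so $\gamma$ lifts to an arc $\alpha \colon R \to K[[t]]$ with $\alpha(\fa) \subset (t^{m+1})$. Applying $\alpha$ to an integral equation $f^n + a_1 f^{n-1} + \cdots + a_n = 0$ with $a_i \in \fa^i$, and using $\ord_t(\alpha(a_i)) \ge i(m+1)$, an elementary valuation estimate on $\alpha(f)^n = -\sum_{i \ge 1} \alpha(a_i)\, \alpha(f)^{n-i}$ forces $\ord_t(\alpha(f)) \ge m+1$, so $\gamma(f) = 0$. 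As $m$ is arbitrary, $f \in \jsc\fa$.

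The main obstacle lies in part (a): realizing the Rees valuation $v_i$, whose natural center may be a prime $\fp \subsetneq \fm$, by an arc genuinely centered at $o$. Properness of $\pi$ is what lets us find a point of $E_i$ lying above $o$, and generic transversality of the curve $C$ is what forces its order to read off precisely the $v_i$-valuations of $f$ and of the generators of $\fa$; arranging regularity of $Y$ and of $E_i$ at the chosen point $q$ may require passing to a further blow-up of $Y$. Part (b) is comparatively routine: once the surjectivity of the local arc truncation is in hand, the integral dependence relation does all the work.
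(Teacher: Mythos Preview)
Your argument for part~(b) is correct and takes a different route from the paper's. The paper deduces $(V(\fa)_m^o)_\red=(V(\ov\fa)_m^o)_\red$ from the contact-locus identity $\Cont^{>m}(\fa)=\Cont^{>m}(\ov\fa)$ together with surjectivity of the truncation maps, and then closes the argument by appealing to part~(a) applied to $\ov\fa$. Your approach is more self-contained: lifting a local $m$-jet that kills $\fa$ to an arc (via smoothness) and reading off $\ord_t\alpha(f)\ge m+1$ directly from an equation of integral dependence avoids the detour through~(a) entirely.

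Part~(a), however, has a genuine gap exactly where you locate the ``main obstacle.'' When the Rees valuation $v_i$ has center $\fp\subsetneq\fm$, the set $E_i\cap\pi^{-1}(o)$ is a \emph{proper} closed subset of $E_i$, and any further blow-up centered over $o$ introduces a new exceptional divisor $F$ which, since $\fa\subset\fm$, is itself a component of the new $E$; every point of the strict transform of $E_i$ lying over $o$ then sits on such an $F$. So the condition that $q$ avoid all other components of $E$ cannot be arranged, and a curve $C$ transverse to $E_i$ at such a $q$ does \emph{not} compute $v_i$: it picks up extra order from $F$, and your claimed equality $\ord_s(\alpha(g))=v_i(g)$ fails. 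Concretely, take $R=k[x,y,z]_{(x,y,z)}$, $\fa=(xy)$, $f=z^N$ with $N\ge 3$; here $v_1=\ord_x$ has center $(x)$ and $v_1(f)=0<1=v_1(\fa)$, but after blowing up $o$ every point of $\~E_1$ over $o$ lies on the exceptional $F$, and for any curve $C$ through such a point transverse to $\~E_1$ one checks $\ord_C(f)=N\cdot\ord_C(F)\ge 2\,\ord_C(F)+1=\ord_C(\fa)$. The repair is not a further blow-up but a different choice of curve: take $C$ tangent to $E_i$ to high order $M$ (equivalently, use a monomial valuation at $q$ with large weight on $E_i$); for $M\gg 0$ the $v_i$-contribution dominates and one obtains the inequality $\ord_C(f)<\ord_C(\fa)$, which is all that is actually needed. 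The paper's argument for~(a) is organized differently: it first proves $\ord_\alpha(\jsc\fa)=\ord_\alpha(\fa)$ for every arc $\alpha$ through $o$, and then passes to divisorial valuations via the arc $\alpha_v$ attached to each such $v$ and the valuative characterization of integral closure.
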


\begin{proof}
First, we claim that 
\begin{equation}
\label{eq:ord_a}
\ord_\a(\jsc\fa) = \ord_\a(\fa)
\end{equation}
for every arc $\a \in X_\infty^o$. Suppose this is not the case for some $\a$. 
Since $\fa \subset \jsc\fa$, we must have $\ord_\a(\jsc\fa) < \ord_\a(\fa)$. 
Setting $m = \ord_\a(\jsc\fa)$, this means that there is an element $f \in \jsc\fa$
such that $\ord_\a(f) = m < \ord_\a(\fa)$. This implies that 
$\ff_m(\a) \in (V(\fa)_{m-1}^o)_\red \setminus (V(f)_{m-1}^o)_\red$. 
The contradiction then follows by \cref{th:geom-char-mjsc},
after we observe that $f \in \mjsc\fa$. 
This proves the claim.

Every divisorial valuation $v$ on $X = \Spec R$ determines an arc 
$\a_v \colon \Spec k_v[[t]] \to X$ given by
\[
\a_v^* \colon R \to \O_v \to \^\O_v \xrightarrow{\simeq} k_v[[t]].
\]
Here $\O_v$ is the valuation ring of $v$, $k_v = \O_v/\fm_v$ is the residue field,
$\^\O_v$ is the $\fm_v$-adic completion, 
and the isomorphism $\^\O_v \simeq k_v[[t]]$ is given by Cohen Structure Theorem. 
Since for every divisorial valuation $v$ we have $\ord_{\a_v} = v$, 
\cref{eq:ord_a} implies that $v(\jsc\fa) = v(\fa)$ for every divisorial valuation $v$.
It follows that $\ov{\jsc\fa} = \ov\fa$
by the valuative characterization of integral closure, and hence we have $\jsc\fa \subset \ov\fa$.
This proves~(a). 

Assume now that $R$ is regular. For any ideal $\fb \subset R$ and any $m \in \N$, denote
\[
\Cont^{> m}(\fb) := \{ \a \in X_\infty \mid \ord_\a(\fb) > m \}.
\]
Note that
\[
\Cont^{> m}(\fb) = \big(\ff_m^{-1}((V(\fb)_m)_\red)\big)_\red.
\]
We have 
\[
\Cont^{> m}(\fa) = \Cont^{> m}(\ov\fa) \fall m \in \N.
\]
Since $X$ is smooth, the projections $\ff_m \colon X_\infty \to X_m$ are surjective, 
and therefore we have $(V(\fa)_m)_\red = (V(\ov\fa)_m)_\red$ for all $m \in \N$. 
By restricting to the reduced fibers over $o \in X$, we see that
\[
(V(\fa)_m^o)_\red = (V(\ov\fa)_m^o)_\red  \fall m \in \N,
\]
and therefore we have $\jsc\fa = \jsc{(\ov\fa)}$ by \cref{th:a-b-same-mjc}
and the definition of jet support closure. 
On the other hand, part~(a) applied to $\ov\fa$ implies that $\jsc{(\ov\fa)} = \ov\fa$. 
Therefore we have $\jsc\fa = \ov\fa$, which proves~(b).
\end{proof}

The following example shows that in general, if $R$ is not regular, the
jet support closure is a tighter operation than integral closure. 

\begin{eg}
Let $R = k[[x,y,z]]/(x^2+y^2+z^2)$ and $\fa = (x,y) \subset R$
(with slight abuse of notation, we still denote by $x,y,z$ the classes
of these elements in $R$). Since $z$ is integral over $\fa$, we have 
$\ov\fa = (x,y,z)$. On the other hand, note that
$R/\fa \cong k[[x]]/(x^2)$. 
Since $(x^2)$ is integrally closed in the ring $k[[x]]$, 
it is also jet support closed $k[[x]]$.
It follows by applying \ref{th:jsc-intrinsic} twice that
$\fa$ is jet support closed in $R$. 
\end{eg}

\section{The embedded local isomorphism problem revisited}
\label{s:loc-isom-problems-revisited}

Our first result of this section provides a characterization of the embedded local isomorphism problem
in terms of jet closures.

\begin{prop}
\label{th:emb-loc-iso-revised}
Let $(R,\fm)$ be a local $k$-algebra, 
and let $X = \Spec R$ with closed point $o \in X$. 
Then the germ $(X,o)$ has the embedded local isomorphism property
if and only if the zero ideal of $R$ is arc closed.
\end{prop}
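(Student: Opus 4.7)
The plan is to reformulate the embedded local isomorphism property purely in terms of ideals of $R$, and then apply the geometric characterization of $m$-jet closures (\cref{th:geom-char}) together with \cref{th:cap-mjc=ac}.

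First I would set up the dictionary between closed immersions and ideals. Any closed immersion of germs $\f \colon (Y,y) \to (X,o)$ is, up to isomorphism of $(Y,y)$, of the form $\Spec(R/\fa) \hookrightarrow \Spec R$ for a unique ideal $\fa \subset R$, with $y$ corresponding to the image of the closed point $o$, and $\f$ is an isomorphism of germs if and only if $\fa = 0$. Next, I would identify the condition that $\f_m^{\loc}$ is an isomorphism. Since $Y_m = V(\fa)_m$ is the closed subscheme of $X_m$ defined by $\fa_m$, the fiber $Y_m^y$ equals $V(\fa)_m^o$, which sits inside $X_m^o$ as the closed subscheme cut out by the image of $\fa_m$ in $R_m/\fm R_m$. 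Hence $\f_m^{\loc}$ is an isomorphism precisely when this image vanishes, which by the very definition of $m$-jet closure is equivalent to $\fa \subset \mjc{(0)}$.

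Third, I would combine these conditions as $m$ varies over $\N \cup \{\infty\}$. By \cref{th:cap-mjc=ac},
\[
\bigcap_{m \in \N} \mjc{(0)} = \ac{(0)} = \cjc{(0)}{\infty};
\]
in particular $\ac{(0)} \subset \mjc{(0)}$ for every finite $m$, so the system of inclusions $\fa \subset \mjc{(0)}$ for all $m \in \N \cup \{\infty\}$ is equivalent to the single inclusion $\fa \subset \ac{(0)}$. Consequently the embedded local isomorphism property for $(X,o)$ says exactly that every ideal $\fa \subset R$ with $\fa \subset \ac{(0)}$ must be the zero ideal, which is clearly equivalent to $\ac{(0)} = (0)$, i.e., to the zero ideal of $R$ being arc closed.

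For the ``only if'' direction I would make the contrapositive explicit: if $\ac{(0)} \ne 0$, taking $\fa := \ac{(0)}$ produces a nontrivial closed immersion $(\Spec R/\ac{(0)}, o) \hookrightarrow (X,o)$ for which $\f_m^{\loc}$ is an isomorphism for every $m \in \N \cup \{\infty\}$, violating the property. There is no real obstacle here: the argument is essentially bookkeeping once \cref{th:geom-char} and \cref{th:cap-mjc=ac} are in hand, with the latter doing the nontrivial work of bridging finite $m$ and $m = \infty$.
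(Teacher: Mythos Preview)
Your proof is correct and follows essentially the same approach as the paper: translate closed immersions into ideals $\fa \subset R$, observe that $\f_m^{\loc}$ is an isomorphism iff $\fa \subset \mjc{(0)}$, and then invoke \cref{th:cap-mjc=ac} to reduce the intersection over all $m$ to the single condition $\ac{(0)} = (0)$. The paper's proof is more terse but the logical structure is identical.
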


\begin{proof}
If $\f \colon (Y,o) \inj (X,o)$ is closed immersion of $k$-scheme germs
and $I_Y \subset R$ is the ideal of $Y$, then for any $m \in \N \cup \{\infty\}$ the 
map $\f_m^\loc \colon Y_m^o \to X_m^o$ is an isomorphism if and only if 
$I_Y \subset \mjc{(0)}$. 
This implies that 
$(X,o)$ has the embedded local isomorphism property if and only if
\[
\bigcap_{m \in \N \cup \{\infty\}}\mjc{(0)} = (0).
\]
By \cref{th:cap-mjc=ac}, this last condition is equivalent to
the condition that $\ac{(0)} = (0)$.
\end{proof}

\begin{rmk}
What \cref{th:emb-loc-iso-revised} says is that \cref{th:emb-loc-isom-problem}
would not change if the condition that $\f_m^\loc \colon Y_m^o \to X_m^o$ 
is an isomorphism for all $m \in \N \cup \{\infty\}$ is replaced with the 
weaker requirement that just $\f_\infty^\loc \colon Y_\infty^o \to X_\infty^o$ is an isomorphism.
\end{rmk}

\begin{rmk}
In view of \cref{th:cap-mjc=ac}, \cref{th:emb-loc-iso-revised} can equivalently be stated 
by saying that $(X,o)$ has the embedded local isomorphism property if and only if
\[
\bigcap_{m \in \N}\mjc{(0)} = (0).
\]
In the formulation of \cref{th:emb-loc-isom-problem}, this means that
it is equivalent to only assume that $\f_m^\loc \colon Y_m^o \to X_m^o$ 
is an isomorphism for all $m \in \N$, 
instead of assuming it for all $m \in \N \cup \{\infty\}$.
\end{rmk}

In view of \cref{th:emb-loc-iso-revised}, it is natural to ask if there are general conditions 
that guarantee that ideals are arc closed. 
Note that, by \cref{th:intrinsic}, it suffices to look at the case of zero ideals
of local $k$-algebras. We have already observed that
ideals are not typically $m$-jet closed if $m \in \N$, even in a Noetherian setting; 
see \cref{r:non-mjc}.
Our first result in this direction shows that, in general, the arc closure is a nontrivial operation. 

\begin{prop}
\label{th:counterexample}
There exists a local $k$-algebra $R$ whose zero ideal is not arc closed.
\end{prop}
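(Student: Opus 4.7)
The plan is to exhibit a non-Noetherian local $k$-algebra $R$ whose maximal ideal $\fm$ has a nonzero intersection of powers, $\bigcap_{n \in \N} \fm^n \ne (0)$, and to observe that any such intersection is automatically contained in $\ac{(0)}$. The key ingredient is the $(t)$-adic separatedness of $A[[t]]$: an $A$-valued arc of $R$ at $o$ is by definition a ring homomorphism $\alpha^* \colon R \to A[[t]]$ with $\alpha^*(\fm) \subset (t) A[[t]]$, so $\alpha^*(\fm^n) \subset (t^n) A[[t]]$ for every $n$, and therefore
\[
\alpha^*\Bigl(\bigcap_{n \in \N} \fm^n\Bigr) \;\subset\; \bigcap_{n \in \N} (t^n) A[[t]] \;=\; (0).
\]
Applied to the universal arc $\l_\infty^*$ of \cref{th:mjc=ker}, this gives $\bigcap_n \fm^n \subset \ker \l_\infty^* = \ac{(0)}$. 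Hence it is enough to find a local $k$-algebra $R$ for which $\bigcap_n \fm^n \ne (0)$; any such $R$ must be non-Noetherian by Krull's intersection theorem.

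For the concrete example I would take
\[
A \;=\; k[x, y_1, y_2, \ldots]\big/\bigl(y_i - x y_{i+1} \mid i \ge 1\bigr), \qquad R \;=\; A_{(x, y_1, y_2, \ldots)}.
\]
Because $y_i = x y_{i+1} \in (x)$ for every $i$, the ideal $(x, y_1, y_2, \ldots)$ reduces to $(x)$, and after localizing one has $\fm = x R$. The same relations yield $y_1 = x^n y_{n+1} \in \fm^n$ for every $n \ge 0$. To verify that $y_1 \ne 0$ in $R$ I would present $A$ as the direct limit $\varinjlim_N k[x, y_N]$ under the transition maps $y_N \mapsto x y_{N+1}$; this simultaneously shows that $A$ is an integral domain (hence so is its localization $R$) and identifies $y_1$ in the $N$-th term with the nonzero element $x^{N-1} y_N$. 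Thus $y_1$ is a nonzero element of $\bigcap_n \fm^n$, and consequently of $\ac{(0)}$, so the zero ideal of $R$ fails to be arc closed.

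The only nontrivial verification in this plan is the claim that the candidate element $y_1$ survives as a nonzero element after the quotient and the localization; once the direct-limit presentation of $A$ is on the table this is routine, and the rest of the argument is formal from the $(t)$-adic separatedness of $A[[t]]$ together with \cref{th:mjc=ker}.
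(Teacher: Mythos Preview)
Your proof is correct and rests on the same underlying observation as the paper's: since $\l_\infty^*(\fm) \subset (t)$, one has $\bigcap_n \fm^n \subset \ker \l_\infty^* = \ac{(0)}$, so any local $k$-algebra that is not $\fm$-adically separated gives a counterexample. The paper reaches this inclusion via the finite-level statements \cref{th:m-adic} and \cref{th:cap-mjc=ac}, whereas you argue directly with $\l_\infty^*$ and $(t)$-adic separatedness of power series; your route is slightly more streamlined but equivalent in content.

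The concrete examples differ. The paper works in $k[[x_1,x_2,\ldots]]$ with $\fa = (x_1 - x_i^i \mid i \ge 2)$ and shows $x_1 \in \bigcap_m(\fa + \fm^{m+1}) \setminus \fa$, then passes to $R/\fa$ via \cref{th:intrinsic}. Your example $R = \bigl(k[x,y_1,y_2,\ldots]/(y_i - xy_{i+1})\bigr)_{(x)}$ is arguably cleaner: it is visibly a domain via the direct-limit description, the maximal ideal is principal, and the failure of $\fm$-adic separation is witnessed by $y_1 = x^n y_{n+1}$ without any quotient-and-lift step. Both examples encode the same phenomenon of an element expressible with arbitrarily high $\fm$-adic order.
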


\begin{proof}
Let $R = k[[x_1,x_2,\dots]]$ be the power series ring in infinite
countably many variables, and let $\fm$ be its maximal ideal. 
Then the ideal
\[
\fa = (x_1-x_i^i \mid i \ge 2) \subset R
\]
is not $m$-jet closed for any $m \in \N\cup\{\infty\}$. In fact, 
the element $x_1 \in R$, which is not in $\fa$, belong to
$\mjc\fa$ for all $m \in\N \cup\{\infty\}$.
Indeed, for $m \in \N$ we have 
\[
x_1 \in (\fa + \fm^{m+1}) \subset \mjc\fa
\]
where the inclusion follows by \cref{th:m-adic}, and hence
\[
x_1 \in \bigcap_{m\in \N}\mjc\fa = \ac\fa
\]
where the equality follows by \cref{th:cap-mjc=ac}. This shows that $\fa \ne \ac\fa$.
We conclude by \cref{th:intrinsic} that the zero ideal of $R/\fa$ is not arc closed. 
\end{proof}

\begin{cor}
There exist germs $(X,o)$ of $k$-schemes that do not have
the embedded local isomorphism property.
\end{cor}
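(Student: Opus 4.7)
The plan is to package the two immediately preceding results into a one-line deduction. By \cref{th:counterexample}, there exists a local $k$-algebra $R$ whose zero ideal is not arc closed; explicitly, one may take $R = k[[x_1,x_2,\dots]]/\fa$ with $\fa = (x_1 - x_i^i \mid i \ge 2)$, as constructed in the proof of that proposition. Setting $X := \Spec R$ and letting $o$ be the closed point of $\Spec R$ corresponding to the maximal ideal, the pair $(X,o)$ is a germ of a $k$-scheme with $\O_{X,o} = R$.

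The second ingredient is the algebraic reformulation in \cref{th:emb-loc-iso-revised}, which asserts that $(X,o)$ has the embedded local isomorphism property if and only if the zero ideal of $\O_{X,o}$ is arc closed. Applying this to our $R$, the failure of arc closedness of $(0) \subset R$ translates directly into the failure of the embedded local isomorphism property for $(X,o)$, producing the desired example.

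There is essentially no obstacle here, since all the substantive work—both the construction of a pathological ring in \cref{th:counterexample} and the equivalence between the embedded local isomorphism property and arc closedness of the zero ideal in \cref{th:emb-loc-iso-revised}—has been carried out. The only point worth flagging is that the resulting germ is necessarily non-Noetherian, which is consistent with, and motivates, the positive results in the Noetherian and geometric settings established in the remainder of the paper.
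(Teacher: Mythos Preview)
Your proof is correct and follows the same approach as the paper: take the explicit ring from \cref{th:counterexample} and apply \cref{th:emb-loc-iso-revised} to translate the failure of arc closedness of the zero ideal into the failure of the embedded local isomorphism property. The paper additionally writes down the explicit closed subscheme $Y = \Spec k[[x_1,x_2,\dots]]/(\fa + (x_1))$ witnessing the failure, but this is just an unpacking of the same argument.
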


\begin{proof}
An example is given by $X = \Spec R/\fa$ where $R$ and $\fa$ are as in 
the proof of \cref{th:counterexample}.
The fact that $(X,o)$ does not have the embedded local isomorphism property
follows by \cref{th:emb-loc-iso-revised}. 
More explicitly, setting $Y = \Spec R/(\fa+(x_1))$, the inclusion 
of germs $\f \colon (Y,o) \to (X,o)$ gives a counterexample to the embedded local isomorphism property.
\end{proof}

The example given in the proof of \cref{th:counterexample} is not Noetherian. 
We do not know whether in the Noetherian setting every ideal is arc closed.

\begin{problem}
\label{q:Noetherian-case}
If $R$ is a Noetherian local $k$-algebra, is every ideal $\fa$ of $R$ arc closed?
\end{problem}

In the positive direction, we have the following results.

\begin{defi}
We say that local $k$-algebra $(R,\fm)$ is \emph{graded} if $R = \bigoplus_{n\ge 0} R_n$ is a graded 
algebra with maximal ideal $\fm = \bigoplus_{n\ge 1} R_n$. We say that a germ $(X,o)$ of a $k$-scheme
is \emph{homogeneous} if $\O_{X,o}$ is a graded local $k$-algebra.
\end{defi}

\begin{thm}
\label{th:ac-trivial}
The zero ideal of $(R,\fm)$ is arc closed in the following cases:
\begin{enumerate}
\item
$R$ is a graded local $k$-algebra;
\item
$R$ is a reduced Noetherian local algebra essentially of finite type over $k$; 
\item
$R = S/(f)$ where $S$ is a regular ring essentially of finite type over $k$ and
$f \in S$.
\end{enumerate}
\end{thm}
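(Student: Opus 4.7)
The plan is to prove each case by exhibiting, for every nonzero $g \in R$, a centered $A$-valued arc $\alpha^* \colon R \to A[[t]]$ with $\alpha^*(g) \ne 0$. By the universality of $\l_\infty^*$, any such arc factors through $\l_\infty^*$, so producing a detecting arc for every nonzero $g$ forces $\ker \l_\infty^* = (0)$, which by \cref{th:emb-loc-iso-revised} is equivalent to arc closedness of the zero ideal. For case (a), with $R = \bigoplus_{n \ge 0} R_n$ graded and $\fm = \bigoplus_{n\ge 1} R_n$, I would use the tautological ``grading arc'' $\gamma^* \colon R \to R[[t]]$ defined by $f \mapsto \sum_n f_n t^n$, where $f = \sum_n f_n$ is the homogeneous decomposition. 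Multiplicativity of the grading $(fg)_n = \sum_{i+j=n} f_i g_j$ makes $\gamma^*$ a $k$-algebra homomorphism, and since $\fm$ is sent into $tR[[t]]$, this arc is centered at $o$. Injectivity is immediate because $f_n$ is recovered as the coefficient of $t^n$, so $\ker\l_\infty^* \subset \ker \gamma^* = (0)$.

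For case (b), given $0 \ne g \in R$ with $R$ reduced Noetherian local essentially of finite type, I would first choose a minimal prime $\fp \not\ni g$ and replace $R$ by $R/\fp$, reducing to the case of a local domain essentially of finite type over $k$ in which $g$ remains nonzero. Letting $K$ denote the fraction field of $R$, I would then produce a discrete valuation ring $V \subset K$ with fraction field $K$ whose center on $\Spec R$ is $o$, for instance by blowing up $\fm$ and localizing at the generic point of a prime divisor in the exceptional locus of a suitable birational model. Since $V$ dominates the domain $R$, the inclusion $R \hookrightarrow V$ is injective, and composing with the Cohen-structure isomorphism $\widehat V \simeq k_v[[t]]$ (with $k_v$ the residue field of $V$) gives an injective centered arc of $R$, under which $g$ is not killed.

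For case (c), by \cref{th:intrinsic} it suffices to show $(f)$ is arc closed in $S$. Given $g \in S \setminus (f)$, I would factor $f = u \prod_i f_i^{a_i}$ in the UFD $S$ (Auslander--Buchsbaum), with $u \in S^*$ and $f_i$ pairwise non-associate primes, then pick $i_0$ such that $g \notin (f_{i_0}^{a_{i_0}})$ and write $g = f_{i_0}^b h$ with $0 \le b < a_{i_0}$ and $h \notin (f_{i_0})$. Applying case (b) to the reduced local domain $S/(f_{i_0})$ (essentially of finite type), I would obtain a centered arc $\alpha^* \colon S/(f_{i_0}) \to L[[t]]$ with $\alpha^*(h \cdot \partial_j f_{i_0}) \ne 0$ for some $j$, invoking generic smoothness of $V(f_{i_0})$ to arrange some $\partial_j f_{i_0} \notin (f_{i_0})$. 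Finally, I would thicken $\alpha^*$ transversally to $\widetilde\alpha^* \colon S \to (L[\epsilon]/(\epsilon^{a_{i_0}}))[[t]]$ by the rule $x_i \mapsto \alpha^*(x_i) + \epsilon c_i$, with the $c_i \in L[[t]]$ chosen so that $\sum_j \partial_j f_{i_0}(\alpha^*)\, c_j \ne 0$. A Taylor expansion yields
\[
\widetilde\alpha^*(f_{i_0}) \equiv \epsilon\, u(t) \pmod{\epsilon^2} \quad \text{with}\ u(t) \ne 0,
\]
so $\widetilde\alpha^*(f) \equiv 0 \pmod{\epsilon^{a_{i_0}}}$ and $\widetilde\alpha^*$ descends to a centered arc of $R$, while $\widetilde\alpha^*(g) \equiv \epsilon^b u(t)^b \alpha^*(h) \pmod{\epsilon^{b+1}}$ is nonzero because $b < a_{i_0}$ and both $u(t)$ and $\alpha^*(h)$ are nonzero in the domain $L[[t]]$.

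The hard part is case (c): any arc of $R = S/(f)$ landing in a domain must kill every nilpotent of $R$, so to detect nilpotent classes like $f_{i_0}^b$ for $b < a_{i_0}$ one is forced to use arcs with a non-reduced coefficient ring. The transverse infinitesimal thickening by $\epsilon$ is the key device, engineered so that $f_{i_0}$ acquires $\epsilon$-order exactly $1$, making $f_{i_0}^{a_{i_0}}$ (and hence $f$) vanish in $L[\epsilon]/(\epsilon^{a_{i_0}})$ while lower powers remain visible. A subtle point requiring care is the appeal to generic smoothness of $V(f_{i_0})$, which is automatic over perfect $k$ but may fail in the inseparable setting; there one would either base-change to $\bar k$ or replace $\partial_j f_{i_0}$ by a higher-order Hasse--Schmidt derivation that remains nontrivial modulo $(f_{i_0})$.
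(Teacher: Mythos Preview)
Your case~(a) is exactly the paper's argument. Your case~(b) is also correct and uses the same underlying idea---arcs coming from divisorial valuations via Cohen structure---that the paper deploys, though the paper packages (b) and (c) together by writing $R \cong S/\fb$ with $S$ regular and $\fb$ integrally closed (radical in case~(b), principal in case~(c)), then invoking the chain $\ac\fb \subset \jsc\fb = \ov\fb = \fb$ from \cref{th:ac<jcs} and \cref{jsc<integral-closure}. This is shorter and, crucially, characteristic-free.

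Your case~(c) is a genuinely different, constructive route, and the characteristic issue you flag is a real gap, not merely a caveat. Take $k = \mathbb{F}_p(s)$, $S = k[x]_{(x^p - s)}$, and $f = (x^p - s)^2$; then $f_{i_0} = x^p - s$ is irreducible with $\partial_x f_{i_0} = 0$, so your first-order thickening $x \mapsto \alpha^*(x) + \epsilon c$ gives $\tilde\alpha^*(f_{i_0}) = \epsilon^p c^p = 0$ in $L[\epsilon]/(\epsilon^2)[[t]]$, and no choice of $c$ produces $\epsilon$-order~$1$. Your fix~(ii) does work here---replace $\epsilon^{a_{i_0}}$ by $\epsilon^{p\,a_{i_0}}$ and use that $D_p(f_{i_0}) = 1$---but this changes the construction and must be argued in general, not just asserted. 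Your fix~(i) is more problematic: base-change need not preserve regularity of $S$ (e.g.\ $S = \big(k[x,y]/(y^2 - x^p + s)\big)_{(y)}$ with $p\ne 2$ is a regular local ring whose base-change to $\bar k$ is singular), so the UFD factorization step can fail after extending the ground field. A further point you leave implicit is that for a general regular $S$ the ``coordinates'' $x_i$ are generators of a presentation $S = (k[x_1,\dots,x_N]/I)_\fp$, and the deformation $x_i \mapsto \alpha^*(x_i) + \epsilon c_i$ must preserve $I$; the valid $c$'s form the tangent space to $V(I)$ at $\alpha^*$, and you need $df_{i_0}$ to be nontrivial on that subspace. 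The paper's integral-closure route sidesteps all of this.
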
 

\begin{proof}
We first prove~(a).
By \cref{th:mjc=ker}, we reduce to check that the homomorphism
$\l_\infty^* \colon R \to (R_\infty/\fm R_\infty)[[t]]$ is injective.
Consider the $R$-valued arc $\a$ defined by the homomorphism
\[
\a^* \colon R \to R[[t]]
\]
given on homogeneous elements $g_n \in R_n$ by $\a^*(g_n) = g_nt^n$. 
Since $\a^*(\fm) \subset (t)$, $\a^*$ factors through $\l_\infty^*$
by the universal property. 
Since $\a^*$ is injective, it follows that $\l_\infty^*$ is injective too. 

Regarding~(b) and~(c), 
we observe that in both cases we can write $R \simeq S/\fb$ where 
$S$ is a regular ring essentially of finite type over $k$ and $\fb$ is an integrally
closed ideal of $S$. In this case we know that $\fb$ is jet support closed
by \cref{jsc<integral-closure}, and hence arc closed by \cref{th:ac<jcs}. 
Then we conclude by \cref{th:intrinsic} that the zero ideal of $R$ is arc closed. 
\end{proof}

\begin{cor}
\label{th:emb-property-holds}
The following germs of $k$-schemes have the embedded local isomorphism property:
\begin{enumerate}
\item
homogeneous germs; 
\item
germs of reduced schemes of finite type over $k$; 
\item
germs of hypersurfaces in smooth $k$-varieties. 
\end{enumerate}
\end{cor}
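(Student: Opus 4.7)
The plan is to deduce the corollary directly by combining \cref{th:emb-loc-iso-revised} with \cref{th:ac-trivial}. Proposition~\ref{th:emb-loc-iso-revised} tells us that a germ $(X,o)$ has the embedded local isomorphism property precisely when the zero ideal of the local ring $\O_{X,o}$ is arc closed. So in each of the three cases it suffices to verify that the local ring falls into the corresponding case of \cref{th:ac-trivial}, which guarantees arc-closedness of the zero ideal.

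First I would dispatch (a): by definition, a homogeneous germ is one for which $\O_{X,o}$ is a graded local $k$-algebra, and this is exactly case~(a) of \cref{th:ac-trivial}. For (b), if $X$ is a reduced scheme of finite type over $k$ and $o \in X$, then $\O_{X,o}$ is the localization of a reduced finitely generated $k$-algebra at a maximal ideal, hence a reduced Noetherian local $k$-algebra essentially of finite type over $k$; this is case~(b) of \cref{th:ac-trivial}. For (c), if $X$ is a hypersurface inside a smooth $k$-variety $Z$, choose a local equation $f \in \O_{Z,o}$ for $X$ near $o$; then $\O_{X,o} \cong \O_{Z,o}/(f)$ with $\O_{Z,o}$ regular and essentially of finite type over $k$, matching case~(c) of \cref{th:ac-trivial}.

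In each case, the relevant part of \cref{th:ac-trivial} yields that the zero ideal of $\O_{X,o}$ is arc closed, and then \cref{th:emb-loc-iso-revised} immediately implies that $(X,o)$ has the embedded local isomorphism property. Since the argument is purely a matter of matching definitions and invoking the two earlier results, there is no genuine obstacle here; the substantive work has already been carried out in the proofs of \cref{th:ac-trivial} and \cref{th:emb-loc-iso-revised}. The only small point to check is that the local ring setup in each geometric case genuinely sits inside the hypotheses of \cref{th:ac-trivial}, which is routine.
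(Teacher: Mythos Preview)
Your proposal is correct and follows exactly the same approach as the paper, which simply cites \cref{th:emb-loc-iso-revised} and \cref{th:ac-trivial}; you have merely spelled out the routine matching of hypotheses in each case. One tiny imprecision: in~(b) the point $o$ need not be closed, so $\O_{X,o}$ is a localization at a prime rather than necessarily a maximal ideal, but this does not affect the argument since it is still reduced, Noetherian, and essentially of finite type over $k$.
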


\begin{proof}
By \cref{th:emb-loc-iso-revised,th:ac-trivial}.
\end{proof}

In the geometric setting, \cref{q:Noetherian-case} leads to the following question.

\begin{problem}
Do germs of Noetherian $k$-schemes have the embedded local isomorphism property?
\end{problem}

This is one of those situations in which either a positive or negative solution
would be a positive outcome. 
If the problem is affirmatively solved, then it would imply that, in the Noetherian setting, 
the local jet-schemes determine completely the germ of the singularity, which is an interesting property
of jet schemes.
On the other hand, if the problem is negatively solved, then it would mean that
even in the setting of ideals of regular Noetherian rings, 
there exists a geometrically meaningful non-trivial closure operation
that is tighter than integral closure.

\begin{bibdiv}
\begin{biblist}

\bib{Bha16}{article}{
   author={Bhatt, Bhargav},
   title={Algebraization and Tannaka duality},
   journal={Camb. J. Math.},
   volume={4},
   date={2016},
   number={4},
   pages={403--461},
}

\bib{DL99}{article}{
   author={Denef, Jan},
   author={Loeser, Fran{\c{c}}ois},
   title={Germs of arcs on singular algebraic varieties and motivic
   integration},
   journal={Invent. Math.},
   volume={135},
   date={1999},
   number={1},
   pages={201--232},
}

\bib{EM09}{article}{
   author={Ein, Lawrence},
   author={Musta\c t\u a, Mircea},
   title={Jet schemes and singularities},
   conference={
      title={Algebraic geometry---Seattle 2005. Part 2},
   },
   book={
      series={Proc. Sympos. Pure Math.},
      volume={80},
      publisher={Amer. Math. Soc., Providence, RI},
   },
   date={2009},
   pages={505--546},
}

\bib{IK03}{article}{
   author={Ishii, Shihoko},
   author={Koll\'ar, J\'anos},
   title={The Nash problem on arc families of singularities},
   journal={Duke Math. J.},
   volume={120},
   date={2003},
   number={3},
   pages={601--620},
}

\bib{IW10}{article}{
   author={Ishii, Shihoko},
   author={Winkelmann, J\"org},
   title={Isomorphisms of jet schemes},
   language={English, with English and French summaries},
   journal={C. R. Math. Acad. Sci. Soc. R. Can.},
   volume={32},
   date={2010},
   number={1},
   pages={19--23},
}

\bib{Mus02}{article}{
   author={Musta{\c{t}}{\u{a}}, Mircea},
   title={Singularities of pairs via jet schemes},
   journal={J. Amer. Math. Soc.},
   volume={15},
   date={2002},
   number={3},
   pages={599--615 (electronic)},
}

\bib{Nas95}{article}{
   author={Nash, John F., Jr.},
   title={Arc structure of singularities},
   note={A celebration of John F. Nash, Jr.},
   journal={Duke Math. J.},
   volume={81},
   date={1995},
   number={1},
   pages={31--38 (1996)},
}

\bib{Voj07}{article}{
   author={Vojta, Paul},
   title={Jets via Hasse-Schmidt derivations},
   conference={
      title={Diophantine geometry},
   },
   book={
      series={CRM Series},
      volume={4},
      publisher={Ed. Norm., Pisa},
   },
   date={2007},
   pages={335--361},
}

\end{biblist}
\end{bibdiv}

\end{document}